\newcommand{\dn}{\mathbf{d}}
\newcommand{\R}{\mathbb{R}}
\newcommand{\diag}{\mathrm{diag}}
\newcommand{\TL}{\tilde{\mathcal{L}}}
\newcommand{\tdn}{\tilde{\mathbf{d}}}
\newtheorem{lem}{Lemma}
\newtheorem{remark}{Remark}
\newtheorem{thm}{Theorem}
\newtheorem{Coro}{Corollary}
\newtheorem{Def}{Definition}
\def\BibTeX{{\rm B\kern-.05em{\sc i\kern-.025em b}\kern-.08em
    T\kern-.1667em\lower.7ex\hbox{E}\kern-.125emX}}
\definecolor{protect-eyes}{RGB}{199,237,204}
\begin{document}

\graphicspath{{figure/}}

\title{Finite-time Non-overshooting Leader-following Consensus Control for Multi-Agent Systems}
\author{Min Li, \IEEEmembership{Graduate Student Member, IEEE}, Andrey Polyakov, Siyuan Wang, and Gang Zheng, \IEEEmembership{Senior Member, IEEE}
\thanks{This work was supported in part by the China Scholarship Council (CSC) under Grant 202106160022 (Corresponding author: Siyuan Wang).}
\thanks{Min Li, Andrey Polyakov, and Gang Zheng are with the Inria, Univ. Lille, CNRS, UMR 9189 - CRIStAL, Centrale Lille, F-59000 Lille, France (e-mail: min.li@inria.fr, andrey.polyakov@inria.fr, gang.zheng@inria.fr).}
\thanks{Siyuan Wang is with the State Key Laboratory of Virtual Reality Technology and Systems, Beihang University, Beijing, China (e-mail: siyuanwang0603@buaa.edu.cn).}}

\maketitle

\begin{abstract}
This paper addresses the finite-time non-overshooting leader-following consensus problem for multi-agent systems, whose agents are modeled by a dynamical system topologically equivalent to the integrator chain. Based on the weighted homogeneity, a nonlinear consensus control protocol is designed. A tuning scheme ensures the finite-time stability of the consensus error such that the agents do not have overshoots in the first component of the state vector. Simulations are presented to demonstrate the effectiveness of the proposed design.


\end{abstract}
\begin{IEEEkeywords}
 Finite-time non-overshooting consensus, homogeneous control, safety, multi-agent system.
\end{IEEEkeywords}
\label{sec:introduction}
\section{Introduction}\label{Intro}

Consensus in Multi-Agent Systems (MAS) has been an important research topic over the past two decades. This study seeks some control protocol to ensure all agents' states reach an agreement. In particular, the leader-following consensus, where the agreement is the leader's state, has received substantial focus \cite{ni2010leader,guan2012finite,li2023generalized}. Despite considerable research progress, there has been limited research addressing leader-following consensus in the context of both time constraints, which affect the convergence rate, and space constraints, which impact the safety of agent movements.

The primary technique involved in the control problem considering time constraints is the finite-time stability strategy, which ensures the system state reaches equilibrium at a time dependent on the initial system state. This method, known since the 1960s \cite{fuller1960relay,korobov1979solution,haimo1986finite,bhat2000finite}, has seen considerable development and application in recent years, particularly in addressing the consensus problem for the MAS \cite{wang2008finite,guan2012finite,li2023generalized}. 

On the other hand, numerous results have focused on the consensus under space constraints. Specifically, constrained consensus, first introduced in \cite{5404774} and actively studied by \cite{lin2013constrained,liu2017constrained,qiu2016distributed}, aiming at a consensus limiting each agent to a closed convex set. Additionally, research on positive systems \cite{de2001stabilization, rami2007controller, farina2011positive} contributes to the consensus study, leading to the positive consensus, which means a consensus with agent states being positive \cite{6681911,7892854,8862888,su2017positive,su2018positive}. 
Moreover, the study of non-overshooting stabilization \cite{phillips1988conditions,krstic2006nonovershooting,schmid2010unified}, which ensures that a partial system state is uniformly bounded by its terminal value, has been integrated into the consensus research by \cite{8378231}. 
Nevertheless, few works focus on consensus considering both time and space constraints. This topic is vital as it contributes to the MAS's working efficiency and safety. 

This paper aims to address the consensus problem under both time and space constraints. Specifically, we tackle the finite-time non-overshooting leader-following consensus problem for MAS with high-order integrator dynamics. The MAS operates under a directed graph that allows local transmission of both the agents' states and a supporting vector using only local information, similar to the configurations in \cite{ning2020bipartite, hong2006tracking, ren2007multi, liu2011synchronization}. Regarding this MAS, we propose a consensus control protocol that guarantees finite-time convergence, improving upon \cite{li2023generalized} by a distributed design. The main technique for achieving finite-time convergence is homogeneity-based control.

Homogeneity is a property characterizing a wide class of systems, including all linear systems and a large subset of nonlinear systems. An asymptotically stable homogeneous system can exhibit finite-time convergence with a negative homogeneity degree. This motivates the ``upgrading" from linear control to homogeneous control \cite{wang2021generalized, polyakov2020generalized}, which is one of the most effective ways to obtain a homogeneous control.

Based on the above establishment, we propose a linear protocol that ensures asymptotic non-overshooting consensus, where the followers' first state component does not exceed the one of the leader. This non-overshooting behavior is ensured by the strict positive invariance of the consensus error within a linear cone. Building on this, the linear protocol is upgraded to a homogeneous one, resulting in a finite-time non-overshooting consensus protocol. This ensures consensus error achieves finite-time stability and strict positive invariance within a homogeneous cone. Additionally, a class of disturbances affecting agent dynamics is characterized, under which the homogeneous protocol can maintain the non-overshooting property. Finally, simulations confirm the control effectiveness.

The remainder of this paper is structured as follows. Section \ref{PF} describes the problem to be studied. Section \ref{Pre} provides some useful knowledge and results.  Section \ref{MR2} outlines the design of the linear non-overshooting consensus protocol and its upgrade to a homogeneous one. Section \ref{SR} presents simulations. Finally, Section \ref{Con} concludes this work.

\textit{Notations}:
$\R$ is the set of real numbers; 
$\R_+$ ({\it{resp.,}} $\R_{-}$) is the set of positive ({\it{resp.,}} negative) real numbers; 
$\R_{\geq0}\!\!=\!\!\R_+\!\!\cup\!\{0\}$;
$\mathbb{N}_+$ is the set of positive integers;
a series of positive integers $1,\dots,N$ is denoted as $\overline{1,N}$; 
let $n\!\!\in\!\!\mathbb{N}_+$, $\R^n$ and $\R^{n\times n}$ denote the  $n\!\!\times\!\! 1$ real vector and the $n\!\!\times\!\! n$ real matrix, respectively; 
$\R^n_{+}$ ({\it{resp.,}} $\R^n_{-}$) denotes the $n\!\!\times\!\! 1$ real vector whose elements are positive ({\it{resp.,}} negative);
$I_n$ is the $n\!\!\times\!\!n$ identity matrix;  
$\diag\{\sigma_i\}_{i=1}^n$ is the (block) diagonal matrix with the diagonal entry $\sigma_i$ of proper dimension; 
$\{\sigma_{ij}\}$ is the $n_1\!\!\times\!\!n_2$ (block) matrix of a proper dimension with element $\sigma_{ij}$, $i\!\!=\!\!\overline{1,n_1}$, $j\!\!=\!\!\overline{1,n_2}$, $n_1,n_2\!\!\in\!\!\mathbb{N}_+$; 
$\mathbf{1}_{n}\!\!\in\!\!\R^n$ ({\it{resp.,}} $\mathbf{0}_{n}\!\!\in\!\!\R^n$) with elements are all ones ({\it{resp.,}} zeros);  
let $P\!\!=\!\!\{P_{ij}\}\!\!\in\!\!\R^{n_1\!\times\!n_2}$, $P_{ij}\!\!\in\!\!\R$, $P\!\!\geq\!\!0 ({\it{resp.,}} \leq\!\!0)$ means $P_{ij}\!\!\geq\!\!0({\it{resp.,}} \leq\!\!0)$, $\forall i\!\!=\!\!\overline{1,n_1}$, $\forall j\!\!=\!\!\overline{1,n_2}$;
let $P\!\!\in\!\! \R^{n\times n}$, $P\!\!\succ\!\! 0( {\it{resp.,}}\!\!\prec\!\! 0)$ means that $P$ is symmetric and positive ({\it{resp.,}} negative) definite; $P$ is anti-Hurwitz if $-P$ is Hurwitz; 
$\exp(P)\!\!=\!\!\sum_{i=0}^\infty\!\!\tfrac{P^i}{i!}$;
$\lambda_{\max}(P)$ ($resp.,$ $\lambda_{\min}(P)$) represents the maximum ($resp.,$ minimum) eigenvalue of $P$;
$\otimes $ represents the Kronecker product; 
$\textbf{B}(R)$ is a ball centered at the origin of radius $R\!\!\in\!\!\R_+$;
$\eta_i\!\!\in\!\!\R^n$ ({\it resp.,} $\xi_i\!\!\in\!\!\R^N$) is the $i_{th}$ element of the canonical Euclidean basis in $\R^n$ ({\it resp.,} $\R^N$);
let $x\!\!=\!\!(x_1,\dots,x_n)^\top\!\!\!\!\in\!\!\R^n$, $\|x\|$ is a norm in $\R^n$; $\|x\|_{P}\!\!=\!\!\sqrt{x^{\top}\!\!Px}$,  $P\!\!\in\!\! \R^{n\times n}$ satisfies $P\!\!\succ\!\!0$; 
$\|x\|_{\infty}\!\!=\!\!\max_{i=\overline{1,n}}|x_i|$;
the function $\alpha\!:\!\R_{\geq0}\!\!\to\!\!\R_{\geq0}$ is said to be of \textit{class $\mathscr{K}$} if  it is continuously strictly increasing with $\alpha(0)\!\!=\!\!0$; the function $\beta\!:\!\R_{\geq0}\!\!\times\!\!\R_{\geq0}\!\!\to\!\!\R_{\geq0}$ is said to be of \textit{class $\mathscr{KL}$} if for each fixed $t$ the function $s\!\!\mapsto\!\! \beta(s,t)$ is of class $\mathscr{K}$ and for each fixed $s$ the function $t\!\!\mapsto\!\! \beta(s,t)$ is continuously strictly decreasing; $L^\infty(\R,\R^n)$ is the space of Lebesgue measurable essentially bounded functions $q\!:\!\R\!\!\to\!\!\R^n$ with a norm 
$\|q\|_{L^\infty}\!\!:=\!\!\mathrm{ess} \sup_{t\in\R_+}\!\!\|q(t)\|_\infty\!\!<\!\!+\infty$. 
\section{Problem Formulation}\label{PF}

Consider a MAS composed of $N\!\!+\!\!1$ agents, characterized by the agent dynamics, the transmitted data, and the communication network, as Fig. \ref{fig:transmission}. 

\subsubsection{Agent dynamics}
the agent dynamics is as follows
\begin{equation}\label{eq:dynamic}
\begin{aligned}
 &\dot x_{i}(t)\!\!=\!\!Ax_i(t)\!\!+\!\!Bu_i(t),\;\; i\!\!=\!\!\overline{0,N}, \;\;t\!\!\in\!\!\R_+ \\
 &A\!\!=\!\!\left(\begin{smallmatrix}
      \textbf{0}_{n-1} & I_{n-1}  \\
       0  & \textbf{0}_{n-1}^\top
 \end{smallmatrix}\right), \quad B\!\!=\!\!\left(\begin{smallmatrix}
      \textbf{0}_{n-1}   \\
       1  
 \end{smallmatrix}\right),
\end{aligned}
\end{equation}
where $x_0\!\!\in\!\!\R^n$ is the state of the leader, with $u_0\!\!=\!\!0$; and $x_i\!\!\in\!\!\R^n$, $i\!\!=\!\!\overline{1,N}$ be the state of followers, with $u_i\!\!\in\!\!\R$, $i\!\!=\!\!\overline{1,N}$ be the control input to be designed.

\subsubsection{Transmitted data} the transmitted data of agent $i$ is $(x_i,v_i)\!\!\in\!\!\R^{n}\!\!\times\!\!\R^n$, where $v_i\!\!:=\!\!f_{v_i}(x_i,x_{j_1},\dots,x_{j_{n_i}},v_{j_1},\dots,v_{j_{n_i}})$, $f_{v_i}\!:\!\R^{n}\!\!\times\!\!\R^{n_in}\!\!\!\times\!\!\R^{n_in}\!\!\to\!\!\R^{n}$, $n_i$ is the number of agents which could convey information to agent $i$.

\begin{remark}
  If $v_i$, $\forall i\!\!=\!\!\overline{0,N}$ be unavailable, the setting corresponds to some classical configurations of MAS \cite{olfati2004consensus,zhu2023robust,9285180}; a quasi-decentralized configuration is achieved by letting $v_0\!\!=\!\!0$, $v_1(t)\!\!=\!\!\ldots\!\!=\!\!v_N(t)\!\!=\!\!u_g(t)$, $u_g\!:\!\R\!\!\to\!\!\R$ \cite{li2023generalized}; 
in addition, \cite{ning2020bipartite, hong2006tracking,ren2007multi,liu2011synchronization} report a distributed design with $v_{j_l}\!\!=\!\!u_{j_l}$, $l\!\!=\!\!\overline{1,{n_i}}$, $i\!\!=\!\!\overline{1,N}$. The extension of the transmitted data with the vector $v_i$ helps agents gather more information useful for control design.  
\end{remark}

\subsubsection{Communication network}
the communication network is described by a fixed and directed graph, and denoted by $\mathcal{G}\!\!=\!\!\{\mathcal{V},\mathcal{E},\mathcal{W}\}$. Here, $\mathcal{V}\!\!\!=\!\!\!\{\overline{0,N}\}$ represents the set of vertices, with vertex $0$ (the leader) serving as the root and vertices $\overline{1,N}$ (followers) serving as the terminals of at least one directed path originating from the root. Detailed definitions on $\mathcal{V}$, $\mathcal{E}$, and $\mathcal{W}$ are presented further on (Section \ref{GT}). Regarding this graph, we have $v_0\!\!=\!\!\textbf{0}_n$.

\begin{figure}[htbp]
    \centering
    \includegraphics[width=0.7\linewidth]{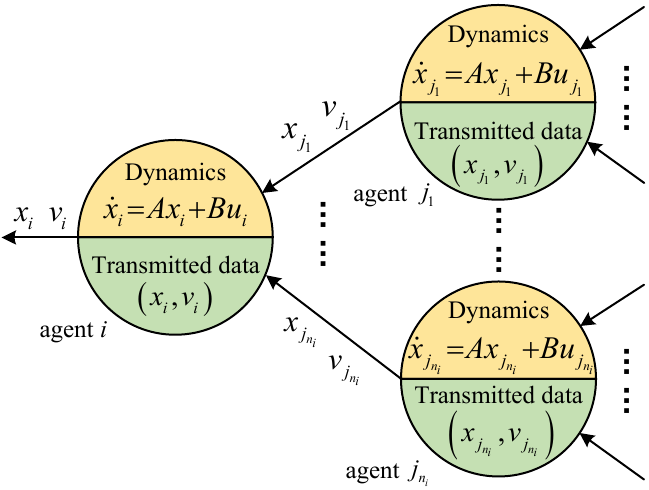}
    \caption{The configuration of the MAS: agent $i$, $i\!\!=\!\!\overline{1,N}$ receives the transmitted data $(x_{j_l},v_{j_l})\!\!\in\!\!\R^n\!\!\times\!\!\R^n$, $l\!\!=\!\!\overline{1,n_i}$, and send out $(x_i,v_i)\!\!\in\!\!\R^n\!\!\times\!\!\R^n$. }
    \label{fig:transmission}
\end{figure}

\subsubsection{Control problem}
regarding the MAS defined above, we introduce some new definitions based on the classical leader-following consensus introduced in \cite{ni2010leader}. 

\begin{Def}\label{def:lf_consensus}
    \textit{Let $e\!\!=\!\!(e_1^\top\!\!,\dots,e_N^\top\!)^\top$ with 
$e_i\!\!=\!\!x_i\!\!-\!\!x_0$  be the consensus error.
The MAS \eqref{eq:dynamic} with some  $u_i$, $i\!\!=\!\!\overline{1,N}$ achieves}
\\
    a) \textit{finite-time \cite{wang2010finite} (resp., asymptotic \cite{ni2010leader}) consensus  if $\lim_{t\!\to T}\|e(t)\|\!\!=\!\!0$, $\forall e(0)\!\!\in\!\!\R^{Nn}$, $T\!\!=\!\!T_c(e(0))$  (resp., $T\!\!=\!\!+\infty$);}
    \\
b) \textit{finite-time (resp., asymptotic) non-overshooting consensus if the finite-time (resp., asymptotic) consensus is achieved and there exists a non-empty set $\Omega\!\!\subset\!\!\Sigma\!\!:=\!\!\{e\!\!\in\!\!\R^{Nn}\!:\!(I_N\!\!\otimes\!\!\eta_1^\top)e\!\!\leq\!\!{0}\}$ being strictly positively invariant for the error system.
            }
\end{Def}
\begin{remark}
  The positive invariance of $\Omega$ guarantees the non-overshooting stabilization of $e$ if $e(0)\!\!\in\!\!\Omega$. In other words, if the follower’s first state component is initially behind the leader, this condition guarantees this behindness in the transient phase of reaching consensus.   
\end{remark}

The problem of finite-time/asymptotic non-overshooting consensus is inspired by the practical reason of safety-critical control systems \cite{ames2016control}, with its notion drawn from the theory of non-overshooting stabilizers \cite{krstic2006nonovershooting,polyakov2023finite}. To address this problem, this paper  extends the concept of non-overshooting (safety) control to MAS by a proper design of a transmitted vector $v_i\!\!=\!\!f_{v_i}(x_i,x_{j_1},\dots,x_{j_{n_i}},v_{j_1},\dots,v_{j_{n_i}})$, $i\!\!=\!\!\overline{1,N}$ and a distributed control protocol $u_i\!\!=\!\!f_{u_i}(v_i)$, $f_{u_i}\!:\!\R^{n}\!\!\to\!\!\R $,
$i\!\!=\!\!\overline{1,N}$ such that the closed-loop error equation 
\begin{equation}\label{eq:nonovershooterrordy}
    \dot e\!\!=\!\! f(e),\;\; f\!:\!\R^{Nn}\!\!\to\!\!\R^{Nn}
\end{equation}
is globally finite-time\footnote{The system $\dot{x}(t)\!\!=\!\!f(x(t))$, $t\!\!\in\!\!\R_+$, $x(0)\!\!=\!\!x_0\!\!\in\!\! \R^n,$ is globally finite-time stable \cite{orlov2004finite} if it is Lyapunov stable \cite{khalil2009lyapunov} and there exists a settling-time function $T(x_0)$, $T\!\!:\!\!\mathbb{R}^{n}\!\!\to\!\!\mathbb{R}_+$ such that $\|x(t)\|\!\!=\!\!0$,  $\forall t\!\!\geq\!\! T(x_0)$. } (asymptotically) stable, and properly
designed a non-empty set $\Omega\!\!\in\!\!\Sigma$  is strictly positively invariant for the error equation \eqref{eq:nonovershooterrordy}. 

\begin{remark}
    The consensus control protocol obtained in resolving the above-defined problem is valid for the MAS with nonlinear agent dynamics 
    \begin{equation*}
    \left\{\begin{array}{ll}
        \!\!\!\dot x_i\!\!=\!\!f_s(x_i,u_i),  \\
         \!\!\!y_i\!\!=\!\!f_o(x_i,u_i),
    \end{array}\right.f_s\!\!:\!\!\R^n\!\!\times\!\!\R^m\!\!\to\!\!\R^n, \;\;f_o\!\!:\!\!\R^n\!\!\times\!\!\R^m\!\!\to\!\!\R^o,\;\;i\!\!=\!\!\overline{0,N}
    \end{equation*}
    which is topologically equivalent \cite{isidori1985nonlinear} to the dynamics \eqref{eq:dynamic}. 
\end{remark}

\section{Preliminaries}\label{Pre}
\subsection{Elements of Graph Theory}\label{GT}
A fixed directed graph $\mathcal{G}$ is characterized by the   \textit{vertex set} $\mathcal{V}$, the \textit{edge set}
$\mathcal{E}$ and the \textit{weighted adjacency matrix}
$\mathcal{W}$. Specifically, the vertex set
$\mathcal{V}\!\!=\!\!\{\overline{1,N}\}$ is a collection of all nodes at the graph indexed by $i\!\!=\!\!\overline{1,N}$. The edge set
$\mathcal{E}\!\!=\!\!\{(i,j)|i,j\!\!\in\!\! \mathcal{V}\}$, $(i,j)\!\!\in\!\! \mathcal{E}$ if node $j$ could transfer its local information to node $i$; $n_i$ denotes the number of incoming edges of node $i$. The weighted adjacency matrix
$\mathcal{W}\!\!=\!\!\{w_{ij}\}\!\!\in\!\!\R^{N\!\times\!N}$, with $w_{ij}$, $i,j\!\!\in\!\!\mathcal{V}$, $w_{ij}\!\!=\!\!1$ if $(i,j)\!\!\in\!\! \mathcal{E}$, and $w_{ij}\!\!=\!\!0$ otherwise. 
The \textit{self-loop} is excluded in this paper, \textit{i.e.,} $w_{ii}\!\!=\!\!0$.
The \textit{Laplacian matrix} associated to the graph $\mathcal{G}$ is defined as $\mathcal{L}\!\!=\!\!\{l_{ij}\}\!\!\in\!\!\R^{N\!\times\!N}$, where $l_{ij}\!\!=\!\!-w_{ij}$ for $i\!\!\neq \!\!j$, and $l_{ij}\!\!=\!\!\sum^N_{k=1}\!\!w_{ik}$ for $i\!\!=\!\!j$.
The latter immediately implies that $\sum_{j=1}^N\!\!l_{ij}\!\!=\!\!0$, $\forall i\!\!=\!\!\overline{1,N}$.
A \textit{directed path} from vertex $j$ to vertex $i$ is formulated if 
 there is a sequence of vertices $\overline{i_1, i_s}$, with $i_1\!\!=\!\!j$, $i_s\!\!=\!\!i$ and $(i_{\iota+1},i_\iota)\!\!\in\!\!\mathcal{E}$, $\iota\!\!=\!\!\overline{1,s\!\!-\!\!1}$, $2\!\!\leq\!\!s\!\!\in\!\!\mathbb{N}_+$. 
 If there exists a vertex $i_r$ has at least one directed path from itself to any one of the rest nodes while $(i_r,j)\!\!\notin\!\!\mathcal{E}$, $\forall j\!\!\in\!\!\mathcal{V}$,  
we call $i_r$ the \textit{root}. 
The graph connected and acyclic is called the \textit{tree}.
The tree with a unique root and where every other vertex has exactly one incoming edge is called the \textit{directed spanning tree}. The directed graph with a unique root contains a directed spanning tree, and the associated Laplacian matrix has one simple zero eigenvalue while all others have positive real parts \cite{ren2005consensus}.
\begin{lem}\label{lem:unique}
    \textit{Let $\mathcal{G}_{lf}\!\!=\!\!\{\mathcal{V}_{lf},\mathcal{E}_{lf},\mathcal{W}_{lf}\}$ denote a directed graph with a unique root labeled by $0$, where $\mathcal{V}_{lf}\!\!=\!\!\{\overline{0,N}\}$, $\mathcal{W}_{lf}\!\!=\!\!\{w_{ij}\}\!\!\in\!\!\R^{(N\!+\!1)\!\times\!(N\!+\!1)}$, and
the associated Laplacian matrix be  $\mathcal{L}\!\!=\!\!\{l_{ij}\}\!\!\in\!\!\R^{(N\!+\!1)\!\times\!(N\!+\!1)}$. Then }
\\
 a)\textit{
there exists an invertible matrix $E\!\!\in\!\!\R^{(N+1)\!\times\!(N+1)}$ such that 
$$E^{-1}\!\!\mathcal{L}E
    \!\!=\!\!\left(\begin{smallmatrix}
    0 & \textbf{0}_N^\top\\
    \textbf{0}_N & \TL
    \end{smallmatrix}\right),$$ with $\TL\!\!\in\!\!\R^{N\!\times\!N}$ being invertible.}
\\ 
b)\textit{ the algebraic equation $\omega_0\!\!=\!\!\textbf{0}_m$,
 \begin{equation}\label{eq:lin_alge}
 	\omega_i\!\!=\!\!\tfrac{1}{\sum_{j=0}^N\!\!w_{ij}}\sum\nolimits_{j=0}^N\!\! w_{ij}\!\!\left(M(x_i\!\!-\!\!x_j)\!\!+\!\!\omega_j\right), \;M\!\!\in\!\! \R^{m\times n}, \omega_i\!\!\in\!\!\R^m,i\!\!=\!\!\overline{1,N}
 \end{equation}
has a unique solution, where $w_{ij}\!\!\in\!\!\R$ is the element of the matrix $\mathcal{W}_{lf}$, and $x_i$, $i\!\!=\!\!\overline{0,N}$ is the state vector of MAS \eqref{eq:dynamic}.  }
\end{lem}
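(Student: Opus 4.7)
The plan is to treat parts (a) and (b) in order, since (b) will rely directly on the invertibility obtained in (a).

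For part (a), I would start by exploiting the structural fact that the root vertex $0$ has no incoming edges in $\mathcal{G}_{lf}$, so $w_{0j}=0$ for every $j$ and consequently $l_{0j}=0$ for every $j$. This means $\mathcal{L}$ is already block lower triangular of the form
\begin{equation*}
\mathcal{L}=\begin{pmatrix} 0 & \mathbf{0}_N^\top \\ \ell & \mathcal{L}_{22}\end{pmatrix},\quad \ell\in\R^N,\;\mathcal{L}_{22}\in\R^{N\times N}.
\end{equation*}
Since the eigenvalues of a block triangular matrix are the union of those of its diagonal blocks, and since the cited result (Ren-Beard 2005) guarantees $0$ is a simple eigenvalue of $\mathcal{L}$ with every other eigenvalue having positive real part, $\mathcal{L}_{22}$ cannot have $0$ as an eigenvalue. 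Hence $\mathcal{L}_{22}$ is invertible. I then propose the explicit similarity
\begin{equation*}
E=\begin{pmatrix} 1 & \mathbf{0}_N^\top \\ -\mathcal{L}_{22}^{-1}\ell & I_N\end{pmatrix},\qquad E^{-1}=\begin{pmatrix} 1 & \mathbf{0}_N^\top \\ \mathcal{L}_{22}^{-1}\ell & I_N\end{pmatrix},
\end{equation*}
and a direct multiplication shows that $E^{-1}\mathcal{L}E = \operatorname{blkdiag}(0,\mathcal{L}_{22})$. Setting $\tilde{\mathcal{L}}:=\mathcal{L}_{22}$ yields the claim.

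For part (b), I would rewrite the fixed-point equation \eqref{eq:lin_alge} as a linear system. Multiplying through by $\sum_{j=0}^N w_{ij}$ (which is strictly positive for every follower $i$ because the spanning-tree property forces each $i\geq 1$ to have at least one incoming edge) and using $l_{ii}=\sum_{j}w_{ij}$, $l_{ij}=-w_{ij}$ for $j\neq i$, together with $\omega_0=\mathbf{0}_m$, yields
\begin{equation*}
\sum_{j=1}^N l_{ij}\,\omega_j \;=\; \sum_{j=0}^N w_{ij}\,M(x_i-x_j),\qquad i=\overline{1,N}.
\end{equation*}
Stacking the $\omega_i$'s into $\Omega=(\omega_1^\top,\dots,\omega_N^\top)^\top\in\R^{Nm}$, this is exactly
\begin{equation*}
(\tilde{\mathcal{L}}\otimes I_m)\,\Omega \;=\; b,
\end{equation*}
where $b$ is the corresponding stacked right-hand side depending on $M$ and the $x_j$'s.

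The conclusion then follows since $\det(\tilde{\mathcal{L}}\otimes I_m)=(\det\tilde{\mathcal{L}})^m\neq 0$ by part (a), so the system has a unique solution $\Omega$.

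The main obstacle is not computational but conceptual: justifying invertibility of $\tilde{\mathcal{L}}$. Everything hinges on the fact that the root's zero row makes $\mathcal{L}$ block lower triangular, which lets the spanning-tree spectral property be transferred from $\mathcal{L}$ to $\mathcal{L}_{22}$. Once this is in place, the explicit construction of $E$ and the Kronecker-form reformulation in (b) are straightforward.
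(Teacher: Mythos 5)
Your proof is correct and follows essentially the same route as the paper: part (b) is reduced to the linear system $(\tilde{\mathcal{L}}\otimes I_m)\Omega=b$ whose coefficient matrix is invertible by part (a), exactly as the paper does (the paper additionally observes that the right-hand side equals $(\tilde{\mathcal{L}}\otimes M)e$ and so reads off the explicit solution $\omega=(I_N\otimes M)e$, which you do not need for uniqueness alone). The only divergence is in part (a), where the paper simply cites an external lemma, whereas you supply the explicit argument (zero root row, block-triangular spectral splitting via the Ren--Beard property, and the similarity $E$), which is a correct, self-contained rendering of the same fact.
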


\begin{proof}
Claim a) is obtained by referring the Lemma 3 of \cite{guan2012impulsive} with $l_{0j}\!\!=\!\!0$, $\forall j\!\!=\!\!\overline{0,N}$ since the root is labeled by $0$. 

For claim b), rewrite \eqref{eq:lin_alge}, with $\omega_{0}\!\!=\!\!\textbf{0}_m$, as 
 \begin{equation*}\begin{aligned}
  \sum\nolimits_{j=0}^Nw_{ij}\omega_i\!\!-\!\!\sum\nolimits_{j=1}^N w_{ij}\omega_j\!\!&=\!\!-\!\!\sum\nolimits_{j=0}^N w_{ij}M(x_j\!\!-\!\!x_i).
  \end{aligned}\end{equation*}
The latter is equivalent to $\TL_i \omega\!\!=\!\!(\TL_i\!\otimes\!M) e$, 
yielding a compact form $\TL \omega\!\!=\!\!(\TL\!\!\otimes\!\!M) e$, $\omega\!\!=\!\!(\omega_1^\top,\dots, \omega_N^\top)^\top$, $\TL_i$ is the $i_{th}$ line of $\TL$. Since $\TL$ is invertible the latter algebraic equation has a unique solution $\omega\!\!=\!\!(I_N\!\!\otimes\!\!M) e$. The proof is completed. 
\end{proof}

\subsection{Elements of Homogeneity Theory}
Homogeneity describes the \textit{symmetry} with respect to a transformation called \textit{dilation}. For example, let a function $h\!:\!\R^n\!\!\to\!\!\R$, whose homogeneity is represented as $\exists\mu\!\!\in\!\!\R$ such that $h(\exp(s)x)\!\!=\!\!\exp(\mu s)h(x)$, $\forall s\!\!\in\!\!\R$. In this example, the argument $x$ is uniformly scaled by $\exp(s)$, which is the simplest case of dilation. However, consider a non-uniform scaling on $x$, which can be denoted as $x\!\!\to\!\!\dn(s)x$, where $\dn(s)$ is an operator maps $\R^n$ to $\R^n$, $\forall s\!\!\in\!\!\R$. To be a dilation, $\dn(s)$ has to satisfy:
\\
$\bullet$ \textit{group property}: $\dn(0)\!\!=\!\!I_n$, and $\dn(a\!\!+\!\!b)\!\!=\!\!\dn(a)\dn(b)$, $a,b\!\!\in\!\!\R$;
\\
$\bullet$ \textit{limit property}: $\lim_{s\rightarrow \pm \infty}\|\mathbf{d}(s)x\|\!\!=\!\!\exp(\pm \infty)$, $x\!\!\in\!\!\R^n\backslash\{ \mathbf{0}\}$.\\
In many cases, we require the continuity of $\dn(s)$, which is determined by the continuity of function $s\!\!\mapsto\!\!\dn(s)x$, $\forall x\!\!\in\!\!\R^n$. We say $\dn(s)$ is a \textit{monotone dilation} if the latter function is strictly increasing \cite{polyakov2020generalized}. The \textit{linear dilation} is defined as
$\mathbf{d}(s)\!\!=\!\!\exp(sG_\dn)$, $s\!\!\in\!\! \mathbb{R}$, $G_\dn\!\!\in\!\!\R^{n\times n}$ is anti-Hurwitz and known as the \textit{generator} of the dilation. Particularly, if
$G_\dn\!\!=\!\!\diag\{r_i\}_{i=1}^n$, $r_i\!\!\in\!\!\R_+$, $i\!\!=\!\!\overline{1,n}$, the generated dilation is called the \textit{weighted dilation}. 
%
The following result supports us in determining a proper $G_\dn\!\!\in\!\!\R^{n\times n}$ for a continuous linear control system $\dot x\!\!=\!\!Ax\!\!+\!\!Bu$, $x\!\!\in\!\!\R^n$, $A\!\!\in\!\!\R^{n\times n}$, $B\!\!\in\!\!\R^{n\times m}$, $u\!\!\in\!\!\R^{m\times n}$.

\begin{lem}\cite{polyakov2020generalized}
\textit{Given $\mu\!\!\neq\!\! 0$, a nilpotent $A\!\!\in\!\!\R^{n\times n}$, and a matrix $B\!\!\in\!\!\R^{n\times m}$ such that $(A,B)$ controllable, then there always exists an anti--Hurwitz matrix $G_{\dn}$ satisfying:
\begin{equation}\label{eq:G_d}
	AG_{\dn}\!\!=\!\!(\mu I_n\!\!+\!\!G_{\dn})A, \quad G_{\dn}B\!\!=\!\!B.
\end{equation}
}
\end{lem}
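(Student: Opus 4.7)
The strategy is to construct $G_{\dn}$ explicitly by prescribing its action on a Krylov-type basis generated by $B$, and then verify both identities on that basis. Focusing first on the single-input setting $m=1$ that is used throughout the paper (with $B=\eta_n$), the nilpotency of $A$ together with the controllability of $(A,B)$ implies that $\{B,AB,A^2B,\ldots,A^{n-1}B\}$ is a basis of $\R^n$. I would define
\begin{equation*}
G_{\dn}\,A^k B := (1-k\mu)\,A^k B,\qquad k=0,1,\ldots,n-1,
\end{equation*}
and extend by linearity. This definition is essentially forced: the relation $AG_{\dn}=(\mu I_n+G_{\dn})A$ together with $G_{\dn}B=B$ yields precisely this action by an immediate induction on $k$.

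The next step is verification. The normalization $G_{\dn}B=B$ is immediate from the case $k=0$. For the commutation identity, I would check on each basis vector: for $0\le k\le n-2$,
\begin{equation*}
A\,G_{\dn}(A^kB)=(1-k\mu)A^{k+1}B,
\end{equation*}
\begin{equation*}
(\mu I_n+G_{\dn})A(A^kB)=\mu A^{k+1}B+(1-(k+1)\mu)A^{k+1}B=(1-k\mu)A^{k+1}B,
\end{equation*}
while for $k=n-1$ both sides vanish by nilpotency ($A^nB=0$). Thus the identity holds on a basis and therefore on all of $\R^n$.

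The spectrum of $G_{\dn}$ in this basis consists of the real numbers $\{1-k\mu\}_{k=0}^{n-1}$, which have positive real parts --- so that $G_{\dn}$ is anti-Hurwitz --- whenever $\mu<0$ (the case of interest for the negative homogeneity degree that drives finite-time consensus) and, more generally, whenever $\mu<\tfrac{1}{n-1}$. The multi-input case $m>1$ would be handled by first bringing $(A,B)$ to Brunovsky normal form, decomposing $\R^n$ into cyclic subspaces generated by the columns of $B$, and then applying the diagonal construction inside each block before reassembling.

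The main obstacle I anticipate is not the construction itself, which is algebraically forced by the two identities, but rather the bookkeeping in the multi-input reduction: choosing compatible cyclic bases across the Kronecker indices so that the local diagonal pieces glue into a single globally defined anti-Hurwitz matrix intertwining $A$ and $B$ in the prescribed way. For the single-input integrator chain actually used in the paper, however, the construction above is complete and explicit, giving $G_{\dn}=\diag\{1-(n-i)\mu\}_{i=1}^n$.
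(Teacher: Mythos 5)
The paper does not actually prove this lemma---it is imported verbatim from \cite{polyakov2020generalized}, and the text immediately afterwards merely asserts the explicit solution $G_\dn=\diag\{1-\mu(n-k)\}_{k=1}^n$ for the canonical pair of \eqref{eq:dynamic} under the restriction $\mu<\tfrac{1}{n-1}$. Your construction is a correct, self-contained derivation of exactly that assertion in the single-input case: the Krylov vectors $A^kB$ form a basis by nilpotency plus controllability, the two identities force $G_\dn A^kB=(1-k\mu)A^kB$, the verification on the basis is right (including the $k=n-1$ case via $A^n=0$), and for $B=\eta_n$ one has $A^kB=\eta_{n-k}$, which recovers the paper's diagonal formula. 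You also correctly identify that anti-Hurwitzness requires $\mu<\tfrac{1}{n-1}$; since your $G_\dn$ is \emph{uniquely} determined by the two identities when $m=1$, this shows the lemma as literally stated (``given $\mu\neq 0$'') is too generous---the hypothesis $\mu<\tfrac{1}{n-1}$ that the paper appends to the explicit solution is in fact necessary, and it is the regime the paper works in throughout ($\mu\in[-1,\tfrac{1}{n-1})$).

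Two small caveats on the multi-input sketch. First, ``Brunovsky normal form'' is a \emph{feedback} normal form ($A\mapsto S(A+BF)S^{-1}$), and feedback would destroy the given nilpotent $A$; what you actually need is the feedback-free controllability-index (crate) decomposition, which selects vectors $b_1,\dots,b_{m'}$ spanning $\mathrm{Im}\,B$ and cyclic chains $\{A^kb_j\}_{k=0}^{\nu_j-1}$ forming a basis of $\R^n$---then your diagonal construction on each chain gives $G_\dn b_j=b_j$, hence $G_\dn B=B$, and the anti-Hurwitz condition becomes $\mu<\tfrac{1}{\nu_{\max}-1}$. Second, none of this is needed for the paper, which only ever uses $m=1$ with the chain-of-integrators pair; for that case your argument is complete.
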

If $(A,B)$ takes a canonical form as \eqref{eq:dynamic}
then the equation \eqref{eq:G_d} has an explicit solution $G_\dn\!\!=\!\!\diag\{1\!\!-\!\!\mu(n\!\!-\!\!k)\}_{k\!=\!1}^n$, $\mu\!\!<\!\!\tfrac{1}{n-1}$.
Based on the concept of dilation, the homogeneity is defined below.
\begin{Def}\cite{polyakov2020generalized}
	\textit{A vector field $f\!:\! \mathbb{R}^n \!\!\to\!\! \mathbb{R}^n$ (resp., function $h\!:\!\R^n\!\!\to\!\!\R$) is said to be $\dn$-homogeneous if there exists a $\mu\!\! \in\!\! \mathbb{R}$ such that
	\begin{equation*}\begin{aligned}
 \begin{array}{cc}
      f(\dn(s) x)\!\!=\!\!\exp(\mu s) \dn(s) f(x),\;\; \forall s\!\!\in\!\! \mathbb{R},\;\; \forall x \!\!\in\!\! \mathbb{R}^n  \\
      (resp.,\;h(\dn(s) x)\!\!=\!\!\exp(\mu s) h(x),\;\; \forall s\!\!\in\!\! \mathbb{R},\;\; \forall x \!\!\in\!\! \mathbb{R}^n) 
 \end{array}
\end{aligned}\end{equation*}
where $\dn$ is a dilation, $\mu$ is known as the homogeneity degree.}
\end{Def}

One main feature of homogeneous systems is the improved convergence rate characterized by its homogeneity degree $\mu$. 
\begin{lem}\label{lem1}
\textit{Let $f\!:\!\R^n\!\!\to\!\!\R^n$ be a continuous $\mathbf{d}$-homogeneous vector
field of degree $\mu\!\! \in\!\! \mathbb{R}$, and the origin of the system $\dot{x}\!\!=\!\!f(x)$ be globally asymptotically stable. Then the origin of the system $\dot{x}\!\!=\!\!f(x)$ is globally finite-time stable for $\mu\!\!\in\!\!\R_-$.}
\end{lem}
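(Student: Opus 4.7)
The plan is to follow the standard homogeneous-Lyapunov argument, adapted to the generalized dilation setting used in the paper. The idea is that on a compact level set of a suitable Lyapunov function, the derivative along the flow is bounded away from zero, and then homogeneity propagates this bound to all of $\R^n\setminus\{0\}$ in a way that yields a scalar differential inequality forcing finite-time convergence when $\mu<0$.

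First I would invoke a converse Lyapunov theorem for $\mathbf{d}$-homogeneous globally asymptotically stable systems to obtain a continuous, positive definite, radially unbounded function $V:\R^n\to\R_{\geq0}$ that is $\mathbf{d}$-homogeneous of some positive degree $\nu>0$ and continuously differentiable on $\R^n\setminus\{\mathbf{0}\}$. Such a $V$ exists by the generalized homogeneous converse Lyapunov construction (see \cite{polyakov2020generalized}). By construction, $\dot V(x)=\nabla V(x)^\top f(x)$ is continuous on $\R^n\setminus\{\mathbf{0}\}$ and, because of the chain rule combined with the two homogeneity identities $V(\mathbf{d}(s)x)=e^{\nu s}V(x)$ and $f(\mathbf{d}(s)x)=e^{\mu s}\mathbf{d}(s)f(x)$, the function $\dot V$ is $\mathbf{d}$-homogeneous of degree $\nu+\mu$.

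Next I would use compactness. By the limit property of the dilation, the set $S=\{x\in\R^n:V(x)=1\}$ is compact and does not contain the origin; since the system is GAS, $\dot V$ is strictly negative on $S$, and continuity on the compact set $S$ yields a constant $c>0$ such that $\dot V(x)\leq -c$ for every $x\in S$. For an arbitrary $x\neq \mathbf{0}$, by the monotonicity and limit properties of $\mathbf{d}$ there is a unique $s\in\R$ with $\mathbf{d}(-s)x\in S$, namely $e^{\nu s}=V(x)$. Applying the homogeneity of $\dot V$ at $\mathbf{d}(-s)x$ gives
\begin{equation*}
\dot V(x)=e^{(\nu+\mu)s}\dot V(\mathbf{d}(-s)x)\leq -c\,e^{(\nu+\mu)s}=-c\,V(x)^{\frac{\nu+\mu}{\nu}}.
\end{equation*}

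Finally, since $\mu<0$ I may choose $\nu$ (or observe from the construction) so that $\nu+\mu>0$ is avoided or, more simply, note that whenever $\nu>|\mu|$ the exponent $\alpha:=(\nu+\mu)/\nu$ lies in $(0,1)$, so the scalar comparison lemma applied to $\dot w\leq -c\,w^\alpha$, $w(0)=V(x(0))$, gives $w(t)=0$ for all $t\geq V(x(0))^{1-\alpha}/(c(1-\alpha))$, which by positive definiteness of $V$ forces $x(t)=\mathbf{0}$ after a finite settling time $T(x(0))$. Combined with the assumed Lyapunov stability inherited from GAS, this yields global finite-time stability.

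The main obstacle is the first step: the existence of a smooth, strictly positive-degree $\mathbf{d}$-homogeneous Lyapunov function for a GAS system under a generalized (not necessarily weighted) linear dilation. For the classical weighted case this is Rosier's theorem, and for the generalized linear dilations used here the analogous result is precisely the converse theorem developed in \cite{polyakov2020generalized}, which I would cite rather than reprove. The rest of the argument is essentially a homogeneous rescaling to $S$ followed by a textbook comparison inequality.
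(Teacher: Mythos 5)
The paper does not prove this lemma at all: it is stated as a known result from the homogeneity literature (it is the generalized-dilation version of the classical Rosier/Bhat--Bernstein theorem, proved in \cite{polyakov2020generalized}), so there is no in-paper argument to compare against. Your reconstruction is the standard and correct one: converse homogeneous Lyapunov function of degree $\nu>0$, homogeneity of $\dot V$ of degree $\nu+\mu$, a uniform negative bound on the compact level set $\{V=1\}$, rescaling to get $\dot V\leq -c\,V^{(\nu+\mu)/\nu}$, and the comparison lemma. Two small remarks: the clause ``so that $\nu+\mu>0$ is avoided'' is garbled (you in fact want $\nu+\mu>0$ together with $\nu+\mu<\nu$, which is exactly what $\nu>|\mu|$ gives), and strictly speaking any $\nu>0$ already yields $\alpha=(\nu+\mu)/\nu<1$, which is all the comparison argument needs, though restricting to $\alpha\in(0,1)$ as you do avoids the singularity of $w\mapsto w^{\alpha}$ at $w=0$ when $\alpha\leq0$. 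Neither point affects the validity of the proof.
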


Another important feature of homogeneous systems is the robustness, in the sense of ISS\footnote{Input-to-State Stability. The system $\dot x(t)\!\!=\!\!f(x(t),q(t))$, $x\!\!\in\!\! \R^n$, $q\!\!\in\!\! \R^p$, $t\!\!\in\!\!\R_+$, $x(0)\!\!=\!\!x_0$ is ISS if there exist $\beta\!\!\in\!\! \mathscr{KL}$ and $\gamma\!\!\in\!\!\mathscr{K}$ such that
	\begin{equation*}
	    \|x(t)\|\!\!\leq\!\! \beta\left(\|x(0)\|,t\right)\!\!+\!\!\gamma\left(\|q\|_{L^{\infty}}\right),\;\;
	 \forall x(0)\!\!\in\!\! \R^n, \;\;\forall q\!\!\in\!\! L^{\infty}(\R,\R^p).
\end{equation*}}, with respect to disturbances.
\begin{lem}\cite{polyakov2020generalized}\label{lem:ISS}
\textit{Assume a continuous vector field $f_q\!:\!\mathbb{R}^n\!\!\times\!\!\R^p\!\!\to\!\! \mathbb{R}^{n+p}$ takes the form of
\begin{equation*}
  {f}_q(x,q)\!\!=\!\!\left(
                   \begin{smallmatrix}
                     f(x,q) \\
                     \mathbf{0}_p \\
                   \end{smallmatrix}
                 \right),\;\; x\!\!\in\!\! \mathbb{R}^{n}, \;\; q\!\!\in\!\! L^{\infty}\!(\R,\R^p),\;\;f\!:\!\R^n\!\!\times\!\!\R^p\!\!\to\!\!\R^n
\end{equation*}
and $\dn$-homogeneous of degree $\mu$ with respect to a dilation $\dn\!\!=\!\!\left(
                                             \begin{smallmatrix}
                                               \dn_x & \mathbf{0}_{n\!\times\! p} \\
                                               \mathbf{0}_{p\!\times\! n} & \dn_q \\
                                             \end{smallmatrix}
                                           \right)
$
in $\mathbb{R}^{n+p}$. If the system
$  \dot x\!\!=\!\!f(x,\mathbf{0})$ 
is asymptotically stable at the origin, then the system
$
  \dot x\!\!=\!\!f(x,q)
  $
is ISS.}
\end{lem}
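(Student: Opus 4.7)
The plan is to combine three ingredients: a homogeneous converse Lyapunov theorem, a compactness argument on the unit sphere of a homogeneous norm, and the standard Lyapunov characterization of ISS.

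First, I would invoke a converse Lyapunov theorem for continuous asymptotically stable homogeneous systems (a generalized-dilation version of Rosier's result) to produce a continuously differentiable, positive definite, radially unbounded, $\dn_x$-homogeneous Lyapunov function $V:\R^n\to\R_{\geq0}$ of some degree $\nu>0$ for the unperturbed system, satisfying $\nabla V(x)^\top f(x,\mathbf{0})<0$ for all $x\neq \mathbf{0}$. The degree $\nu$ can be chosen large enough (in particular $\nu+\mu>0$) so that $x\mapsto \nabla V(x)^\top f(x,q)$ is jointly continuous in $(x,q)$ and vanishes at the origin.

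Next, I would introduce canonical homogeneous norms $\|\cdot\|_{\dn_x}$ on $\R^n$ and $\|\cdot\|_{\dn_q}$ on $\R^p$, and consider the compact set $S_r=\{(x,q):\|x\|_{\dn_x}=1,\,\|q\|_{\dn_q}\leq r\}$. Since $W(x,q):=\nabla V(x)^\top f(x,q)$ is continuous on $S_r$ and coincides at $q=\mathbf{0}$ with a strictly negative function bounded away from zero, a uniform bound $W(x,q)\leq -c<0$ holds on $S_{r_0}$ for some $r_0>0$. Using the $\dn$-homogeneity of $f_q$ of degree $\mu$ together with $V(\dn_x(s)x)=\exp(\nu s)V(x)$, one obtains the scaling identity $W(\dn_x(s)x,\dn_q(s)q)=\exp((\mu+\nu)s)W(x,q)$, and by picking $s=s(x)$ such that $\|\dn_x(-s)x\|_{\dn_x}=1$, the local bound on $S_{r_0}$ transports to a global ISS-Lyapunov inequality of the form $\dot V(x)\leq -\alpha(\|x\|_{\dn_x})$ whenever $\|q\|_{\dn_q}\leq \chi(\|x\|_{\dn_x})$, for class-$\mathscr{K}$ functions $\alpha$ and $\chi$ induced by $c$, $r_0$, $\mu$ and $\nu$.

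Finally, I would conclude ISS from this implication-form ISS-Lyapunov estimate via the standard equivalence theorem, and translate the resulting $\mathscr{KL}$- and $\mathscr{K}$-bounds from the homogeneous norms back to the Euclidean norm using the two-sided power-type estimates $c_1\|x\|^{a_1}\leq\|x\|_{\dn_x}\leq c_2\|x\|^{a_2}$ available for linear monotone dilations. The main obstacle is the first step: securing a \emph{smooth} $\dn_x$-homogeneous Lyapunov function in the merely continuous (non-Lipschitz) regime and certifying that $\nabla V\!\cdot\! f$ is well-defined under the chosen degree. Once this converse Lyapunov fact is in place, the rest of the argument is essentially the combination of compactness on the unit sphere and the rescaling identity forced by $\dn$-homogeneity.
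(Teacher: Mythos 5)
The paper does not prove this lemma; it is quoted verbatim as a known result from the cited reference, so there is no in-paper argument to compare against. Your outline is essentially the standard proof of this homogeneous ISS result in that literature (homogeneous converse Lyapunov function of sufficiently large degree $\nu$ with $\nu+\mu>0$, compactness of $\{\|x\|_{\dn_x}=1,\ \|q\|_{\dn_q}\le r_0\}$, and the rescaling identity $W(\dn_x(s)x,\dn_q(s)q)=\exp((\mu+\nu)s)W(x,q)$ yielding the implication-form ISS-Lyapunov estimate with $\chi$ linear in $\|x\|_{\dn_x}$), and it is correct. The only point you should make explicit is that the converse Lyapunov step for a merely continuous (hence possibly non-Lipschitz, non-uniquely solvable) field requires \emph{strong} global asymptotic stability and the appropriate generalized-dilation version of Rosier's theorem, which is exactly what the cited source supplies.
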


Next, we define the \textit{canonical homogeneous norm}.
\begin{Def}\label{def:hnorm}
	\textit{The functional $\|\cdot\|_{\mathbf{d}}\!:\!\R^n\!\!\to\!\! \R_{\geq0}$ defined as $\|\mathbf{0}\|_{\dn}\!\!=\!\!0$ and
$\|x\|_\mathbf{d}\!\!=\!\!\exp(s_x)$, where $s_x\!\!\in\!\! \mathbb{R}\!:\!\|\mathbf{d}(-s_x)x\|\!\!=\!\!1$, $\mathbf{d}$ is a linear monotone dilation,
	is called the canonical homogeneous norm induced by a norm $\|\cdot\|$ in $\mathbb{R}^n$.}
\end{Def}

The canonical homogeneous norm is not a norm in the usual sense but indeed a norm in $\R^n_{\dn}$ being homeomorphic to $\R^n$ \cite{polyakov2020generalized}. If $\dn(s)\!\!=\!\!\exp(s)I_n$, then, $\|\cdot\|_{\dn}\!\!=\!\!\|\cdot\|$.
Moreover, 
the canonical homogeneous norm has the following properties:
\\
	$\bullet$ homogeneous of degree $1$, {\it i.e.,} $\|\mathbf{d}(s)x\|_\mathbf{d}\!\!=\!\!\exp(s)\|x\|_\mathbf{d}$,  $\forall s\!\!\in\!\! \mathbb{R}$;\\
	$\bullet$  $\|x\|\!\!=\!\!1 \!\!\Leftrightarrow\!\! \|x\|_{\dn}\!\!=\!\!1$;
 \\
	$\bullet$ $\|\cdot\|_{\dn}$ is a locally Lipschitz continuous function on $\R^n \backslash \{\mathbf{0}\}$;
 \\
	$\bullet$ if  $\|x\|\!\!=\!\!\|x\|_P$, where $P\!\!\in\!\!\R^{n\!\times\!n}$ satisfies
	$P\!\!\succ\!\! 0$, $PG_{\dn}\!\!+\!\!G_{\dn}^{\top}P\!\!\succ\!\!0$,
	then, the linear dilation $\dn(s)\!\!=\!\!\exp(sG_{\dn})$ is monotone and
	\begin{equation*}\label{eq:part-deriv}
		\tfrac{\partial \|x\|_{\dn}}{\partial x}\!\!=\!\!\tfrac{\|x\|_{\dn}x^{\top}\!\dn^{\top}\!(-\ln\!\|x\|_{\dn}) P\dn(-\ln \!\|x\|_{\dn})}{x^{\top}\!\dn^{\top}\!(-\ln \!\|x\|_{\dn})PG_{\dn}\dn(-\ln \!\|x\|_{\dn})x},\;\; x\!\!\in\!\!\R^n\backslash\{\mathbf{0}\}.
	\end{equation*}

\subsection{Distributed Homogeneous Consensus}\label{MR1}

In contrast to the quasi-decentralized homogeneous leader-following consensus control relying on a centralized design of $v_i$, $i\!\!=\!\!\overline{0,N}$ in \cite{li2023generalized}, we propose a distributed scheme.

\begin{thm}\label{thm:state_con}
\it{Let $\mu\!\!\in\!\![-1,\tfrac{1}{n-1})$. Let $(X,Y)\!\!\in\!\!\R^{n\!\times\!n}\!\!\times\!\!\R^{1\!\times\!n}$ be the solution of the following LMI:
\begin{equation}\label{eq:LMI_P_U}
 X\!\!\succ\!\!0,\quad G_\dn X\!\!+\!\!XG_\dn\!\!\succ\!\!0, \quad AX\!\!+\!\!X A^\top\!\!\!\!-\!\!BY\!\!-\!\!Y^\top\!\! B^\top\!\!\!\!\prec\!\!0,
\end{equation}
where $G_\dn\!\!=\!\!\diag\{1\!\!-\!\!\mu(n\!\!-\!\!k)\}_{k\!=\!1}^n$. Let $\dn$ be a dilation group generated by $G_\dn$. 
Let the transmitted vector 
\begin{equation}\label{eq:def_vi}\begin{aligned}
v_0\!\!=\!\!\textbf{0}_n,\quad v_i\!\!=\!\!\tfrac{1}{\sum_{j=0}^Nw_{ij}}\sum\nolimits_{j=0}^N w_{ij}\left(x_i\!\!-\!\!x_j\!\!+\!\!v_j\right), \;\; i\!\!=\!\!\overline{1,N}
 \end{aligned}\end{equation}
 and the distributed homogeneous consensus protocol
 \begin{equation}\label{eq:homo_control}
 	u_i\!\!=\!\!-\|v_i\|_{\dn}^{1+\mu}K\dn(-\ln\|v_i\|_{\dn})v_i,\;\; i\!\!=\!\!\overline{1,N}
 \end{equation}
with $K\!\!=\!\!YX^{-1}$, $\|\cdot\|_\dn$ is induced by the weighted Euclidean norm $\|\cdot\|_P$, $P\!\!=\!\!X^{-1}$,
such that 
\\
$\bullet$ equation \eqref{eq:def_vi}, \eqref{eq:homo_control} and \eqref{eq:nonovershooterrordy} have unique solution $v_i$, $u_i$ and $e_i$, $i\!\!=\!\!\overline{1,N}$, respectively;
\\
$\bullet$ closed-loop error equation \eqref{eq:nonovershooterrordy} is globally finite-time stable for $\mu\!\!\in\!\![-1,0)$.


}

\end{thm}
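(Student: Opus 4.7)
The plan is to exploit the identity $v_i=e_i$ that is hidden in the implicit definition \eqref{eq:def_vi}, and then reduce the problem to a single decoupled homogeneous closed-loop copy per agent. Observe that \eqref{eq:def_vi} is exactly the algebraic relation of Lemma \ref{lem:unique}(b) with $M=I_n$. That lemma already certifies both existence and uniqueness of the solution of \eqref{eq:def_vi} and identifies it as $v_i=(I_N\otimes I_n)e|_i=e_i$ for each $i=\overline{1,N}$. Substituting this into \eqref{eq:homo_control} turns the protocol into a genuinely decentralized feedback in the local error, and using $\dot x_0=Ax_0$ (since $u_0=0$) one obtains that each follower error satisfies the decoupled closed loop
\begin{equation*}
\dot e_i=F(e_i):=Ae_i-\|e_i\|_\dn^{1+\mu}BK\dn(-\ln\|e_i\|_\dn)e_i,
\end{equation*}
extended by $F(\mathbf{0}_n):=\mathbf{0}_n$. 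Continuity of $F$ at the origin follows from $1+\mu\geq 0$ (which is why the statement restricts $\mu\geq -1$), and this gives the claimed existence/uniqueness of $u_i$ and of the error trajectory.

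The next step is to verify that $F$ is $\dn$-homogeneous of degree $\mu$. The intertwining relations $A\dn(s)=e^{\mu s}\dn(s)A$ and $\dn(s)B=e^sB$ follow from \eqref{eq:G_d}, and together with the first-degree homogeneity of the canonical norm $\|\dn(s)e\|_\dn=e^s\|e\|_\dn$ a direct substitution yields $F(\dn(s)e)=e^{\mu s}\dn(s)F(e)$. So the decoupled error system inherits a homogeneity structure that is compatible with the dilation $\dn$.

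The core is to show asymptotic stability via the homogeneous Lyapunov candidate $V(e_i):=\|e_i\|_\dn$. Set $s:=\ln\|e_i\|_\dn$ and $z:=\dn(-s)e_i$, so that $\|z\|_P=1$. Using the two intertwining relations above together with $\|e_i\|_\dn^{1+\mu}=e^{(1+\mu)s}$ one gets the clean identity $\dn(-s)F(e_i)=e^{\mu s}(A-BK)z$. Plugging this into the explicit formula for $\partial\|e_i\|_\dn/\partial e_i$ recalled in the preliminaries, a cancellation of the dilation matrices collapses the Lyapunov derivative to
\begin{equation*}
\dot V=\|e_i\|_\dn^{1+\mu}\,\frac{z^\top P(A-BK)z}{z^\top PG_\dn z},\qquad P=X^{-1},\;K=YX^{-1}.
\end{equation*}
The denominator is strictly positive because $G_\dn X+XG_\dn\succ 0$ is equivalent, after congruence by $P$, to $PG_\dn+G_\dn^\top P\succ 0$. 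The numerator is strictly negative because the third LMI in \eqref{eq:LMI_P_U}, after pre- and post-multiplication by $P$ and substitution $Y=KX$, is equivalent to $P(A-BK)+(A-BK)^\top P\prec 0$. Therefore $\dot V<0$ on $e_i\neq\mathbf{0}_n$, which gives global asymptotic stability of each decoupled system, hence of the concatenated error equation \eqref{eq:nonovershooterrordy}. Finally, since $F$ is continuous and $\dn$-homogeneous of degree $\mu<0$ for $\mu\in[-1,0)$, Lemma \ref{lem1} upgrades asymptotic stability to global finite-time stability.

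The main obstacle will be the bookkeeping in the Lyapunov step: one has to show that, after the change of coordinates $z=\dn(-s)e_i$, the nonlinear gain $\|e_i\|_\dn^{1+\mu}K\dn(-\ln\|e_i\|_\dn)$ in the feedback recombines exactly with the drift $A$ to produce the closed-loop matrix $A-BK$ on the unit homogeneous sphere. The intertwining relations \eqref{eq:G_d} are tailored precisely for this cancellation, but writing it out cleanly requires keeping track of the powers of $e^s$ coming from $A$, $B$, and the homogeneous norm separately. Once this simplification is in place, the rest is a direct reading of the LMI and an invocation of Lemma \ref{lem1}.
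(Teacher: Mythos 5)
Your proposal is correct and follows essentially the same route as the paper: identify $v=e$ via Lemma~\ref{lem:unique}(b) with $M=I_n$, decouple the error dynamics agent-by-agent, establish $\dn$-homogeneity of degree $\mu$ from the intertwining relations, show $\tfrac{d\|e_i\|_{\dn}}{dt}<0$ using the explicit gradient of the canonical homogeneous norm and the LMI \eqref{eq:LMI_P_U} (your congruence by $P$ and substitution $Y=KX$ is exactly the paper's numerator $P(AX+XA^\top-BY-Y^\top B^\top)P$), and invoke Lemma~\ref{lem1}. The only slip is the claim that $1+\mu\geq 0$ gives continuity of the closed loop at the origin: for $\mu=-1$ the feedback is discontinuous at $v_i=\mathbf{0}_n$ and the paper handles that endpoint by passing to Filippov solutions in a remark following the theorem.
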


\begin{proof}
Recall Lemma \ref{lem:unique} with $\omega_i\!\!=\!\!v_i$, $ i\!\!=\!\!\overline{0,N}$, $M\!\!=\!\!I_n$, the equation \eqref{eq:def_vi} has a unique solution $v\!\!=\!\!e$, $v\!\!=\!\!(v_1^\top,\dots,v_N^\top)^\top$.
In this case, the closed-loop error equation \eqref{eq:nonovershooterrordy} 
yielding
\begin{equation}\label{eq:exp_error_dy}\begin{aligned}
\dot e\!\!=&f(e)\\
    \!\!=&\!\left(I_N\!\!\otimes\!\!A\!\!-\!\!\diag\{\|(\xi_i^\top\!\!\!\otimes\!\!I_n)e\|_{\dn}^{1+\mu}\!BK\dn(-\!\!\ln\!\|\!(\xi_i^\top\!\!\!\!\otimes\!\!I_n)e\|_{\dn})\}_{i=1}^N\right)e,
\end{aligned} \end{equation}
has a unique solution.
We notice equation \eqref{eq:exp_error_dy} is completely decoupled component-wise, \textit{i.e.,} the dynamics of $e_i$ can be written as $\dot e_i\!\!=\!\!f_{i}(e_i)\!\!=\!\!
  (A\!\!-\!\!\|e_i\|_{\dn}^{1+\mu}\!\!BK\dn(-\ln\|e_i\|_{\dn}))e_i$,  $f_{i}\!\!:\!\!\R^n\!\!\to\!\!\R^n$, $i\!\!=\!\!\overline{1,N}$.
In this case, the stability of $e_i\!\!\in\!\!\R^n$, $i\!\!=\!\!\overline{1,N}$ is exhibited by $e\!\!\in\!\!\R^{Nn}$. 

We first concentrate on the homogeneity.
Equation \eqref{eq:G_d} implies $(I_N\!\otimes\!A)\tilde\dn(s)\!\!=\!\!\exp(\mu s)\tilde\dn(s)(I_N\!\otimes\!A)$ and $\exp(s)(I_N\!\otimes\!B)\!\!=\!\!\tilde\dn( s)(I_N\!\otimes\!B)$ \cite{polyakov2020generalized}, where $\tdn\!\!=\!\!I_N\!\!\otimes\!\!\dn$.
The vector field $e\!\!\mapsto\!\!f(e)$ is $\tilde\dn$-homogeneous of degree $\mu\!\!\in\!\!\R$ with respect to $e\!\!\to\!\!\tilde\dn(s)e$ since
$f(\tilde\dn(s)e)\!\!=\!\!\exp(\mu s)\tilde\dn(s)f(e)$, $\forall s\!\!\in\!\!\R$.
Similarly, 
 $e_i\!\!\mapsto\!\!f_{i}(e_i)$ be $\dn$-homogeneous of degree $\mu\!\!\in\!\!\R$ with respect to $e_i\!\!\to\!\!\dn(s)e_i$ since $f_i(\dn(s)e)\!\!=\!\!\exp(\mu s)\dn(s)f_i(e)$, $\forall s\!\!\in\!\!\R$.


Then, we prove $\|e_i\|_\dn$ be the Lyapunov candidate for  $\dot e_i\!\!=\!\!f_{i}(e_i)$. Indeed,
\begin{equation*}\begin{aligned}
&\tfrac{d \|e_i\|_{\dn}}{d t}
\!\!=\!\!\tfrac{\|e_i\|_\dn^{1+\mu}e_i^{\top}\dn^{\top}\!\!\!(-\ln \|e_i\|_\dn) P(AX\!+\!XA^\top\!\!\!-\!BY\!-\!Y^\top\!\!\! B^\top)P\dn(-\ln \|e_i\|_\dn)e_i}
{e_i^{\top}\dn^{\top}\!\!\!(-\ln \|e_i\|_\dn)P(G_{\dn}X\!+\!XG_\dn )P\dn(-\ln \|e_i\|_\dn)e_i}.
	\end{aligned}\end{equation*}
We have $\tfrac{d \|e_i\|_{\dn}}{d t}\!\!<\!\!0$ since \eqref{eq:LMI_P_U} is satisfied. Since the latter analysis is valid for any $i\!\!=\!\!\overline{1,N}$, then we have the error equation $\dot e\!\!=\!\!f(e)$ being globally asymptotically stable. Finally,  the proof is completed by recalling Lemma \ref{lem1}. 
\end{proof}
For $\mu=-1$ the control \eqref{eq:homo_control} has a discontinuity at the point $v_i=\textbf{0}_{n}$. In this case, solutions of the closed-loop system are understood in the sense of  Filippov \cite{Filippov1988:Book}. 
Moreover, if some model uncertainty is introduced to the dynamics of agents \eqref{eq:dynamic} as below,
\begin{equation}\label{eq:dynamic_disturb}
\begin{aligned}
 \dot x_{i}(t)\!\!=\!\!Ax_i(t)\!\!+\!\!Bu_i(t)\!\!+\!\!q_{i}(t),\;\;  q_{i}\!\!\in\!\!L^\infty\!(\R,\R^{n}),\;\;i\!\!=\!\!\overline{0,N}.
\end{aligned}
\end{equation}
The error equation becomes
\begin{equation}\label{eq:exp_error_dy_disturb}\begin{aligned}
 	\dot e&\!\!=\!\!f(e,q)\\
  &\!\!=
    \!\!\!(I_N\!\!\otimes\!\!A)e
    \!\!-\!\!\diag\{\!\|\!(\xi_i^\top\!\!\!\!\otimes\!\!I_n)e\|_{\dn}^{1\!+\!\mu}\!BK\dn(\!-\!\!\ln\!\!\|\!(\xi_i^\top\!\!\!\!\otimes\!\!I_n)e\|_{\dn})\!\}_{i=1}^N\!e\!\!+\!\!q,
\end{aligned} \end{equation}
with $f\!:\!\R^{Nn}\!\!\times\!\!\R^{Nn}\!\!\to\!\!\R^{Nn}$, $q\!\!=\!\!((q_{1}\!\!-\!\!q_{0})^\top,\dots,(q_{N}\!\!-\!\!q_{0})^\top)^\top\!\!\!\!\!\in\!\!L^\infty\!(\R,\R^{Nn})$. 
\begin{Coro}\label{remark:ISS}
  {\it Let the conditions of Theorem \ref{thm:state_con} hold. Let $\mu\!\!\in\!\!(-1,0)$. The error equation \eqref{eq:exp_error_dy_disturb} is ISS with respect to $q\!\!\in\!\!L^\infty\!(\R,\R^{Nn})$.}
\end{Coro}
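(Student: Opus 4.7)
The plan is to apply Lemma \ref{lem:ISS} to the extended system $(e,q)\in\R^{Nn}\times\R^{Nn}$, interpreting \eqref{eq:exp_error_dy_disturb} together with $\dot q=\mathbf{0}_{Nn}$ as a single autonomous system on $\R^{2Nn}$. The essential step is to exhibit a block-diagonal linear dilation on $\R^{2Nn}$ under which the extended vector field $(f(e,q)^\top,\mathbf{0}_{Nn}^\top)^\top$ is homogeneous of degree $\mu$.

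First, I would retain $\tdn(s)=I_N\otimes\dn(s)$ on the $e$-component, which already makes $e\mapsto f(e,\mathbf{0}_{Nn})$ $\tdn$-homogeneous of degree $\mu$ by the proof of Theorem \ref{thm:state_con}. Because the dependence of $f$ on $q$ in \eqref{eq:exp_error_dy_disturb} is purely additive, I am forced to choose the dilation on the $q$-component so that $\dn_q(s)q=\exp(\mu s)\tdn(s)q$; this yields $\dn_q(s)=\exp(\mu s)\tdn(s)=I_N\otimes\exp(s(\mu I_n+G_\dn))$, with generator $G_q=I_N\otimes(\mu I_n+G_\dn)=I_N\otimes\diag\{1-\mu(n-k-1)\}_{k=1}^n$.

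Next, I would verify that $\dn_q$ is a bona fide dilation by checking that $G_q$ is anti-Hurwitz. Its eigenvalues are the diagonal entries $1-\mu(n-k-1)$ for $k=\overline{1,n}$; their minimum equals $1+\mu$ (attained at $k=n$), which is strictly positive precisely under the hypothesis $\mu\in(-1,0)$. This single inequality is the only place the condition $\mu>-1$ is invoked, and it explains why the boundary case $\mu=-1$ must be excluded from the corollary. With this verified, $\blkdiag(\tdn(s),\dn_q(s))$ defines a valid linear dilation on $\R^{2Nn}$, and continuity of the extended field is automatic since $1+\mu>0$ removes the singularity of \eqref{eq:homo_control} at the origin.

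Finally, the homogeneity identity $f(\tdn(s)e,\dn_q(s)q)=\exp(\mu s)\tdn(s)f(e,q)$ splits by additivity into the piece $f(\tdn(s)e,\mathbf{0}_{Nn})=\exp(\mu s)\tdn(s)f(e,\mathbf{0}_{Nn})$, already established in Theorem \ref{thm:state_con}, and the tautology $\dn_q(s)q=\exp(\mu s)\tdn(s)q$ built into the construction of $\dn_q$. Since Theorem \ref{thm:state_con} delivers global asymptotic (in fact finite-time, via Lemma \ref{lem1}) stability of the unperturbed system $\dot e=f(e,\mathbf{0}_{Nn})$, all hypotheses of Lemma \ref{lem:ISS} are in force and ISS of \eqref{eq:exp_error_dy_disturb} with respect to $q\in L^\infty(\R,\R^{Nn})$ follows. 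The only nontrivial obstacle is the anti-Hurwitz verification above, which sharply pins down the admissible range of $\mu$; everything else is either inherited from the undisturbed analysis or is an algebraic tautology enforced by the choice of $\dn_q$.
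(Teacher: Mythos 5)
Your proposal is correct and follows essentially the same route as the paper: embed the disturbance as a constant state, equip the $q$-block with the dilation $\exp(\mu s)\tdn(s)$, verify degree-$\mu$ homogeneity of the extended field, and invoke Lemma~\ref{lem:ISS} together with the asymptotic stability from Theorem~\ref{thm:state_con}. Your explicit check that the generator $I_N\otimes(\mu I_n+G_\dn)$ is anti-Hurwitz precisely when $\mu>-1$ is a welcome clarification of why the corollary excludes $\mu=-1$, a point the paper leaves implicit.
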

\begin{proof}
Recall Lemma \ref{lem:ISS}, let $f_q(e,q)\!\!=\!\!(f^\top\!\!(e,q),\textbf{0}^\top_{Nn})^\top$, $f_q\!\!:\!\!\R^{Nn}\!\!\times\!\!\R^{Nn}\!\!\to\!\!\R^{2Nn}$. It is validated that 
$f_q(\tdn(s)e,\exp(\mu s)\tdn(s)q)\!\!=\!\!\exp(\mu s)\diag\{\tdn(s),\exp(\mu s)\tdn(s)\}f_q(e,q)$, $\forall s\!\!\in\!\!\R$. Thus we have 
the vector field $f_q\!\!:\!\!\R^{Nn}\!\!\times\!\!\R^{Nn}\!\!\to\!\!\R^{2Nn}$ be  
homogeneous of degree $\mu$ with respect to dilation $(e,q)\mapsto (\tdn(s)e,\exp(\mu s)\tdn(s)q)$. The asymptotic stability of $\dot e\!\!=\!\!f(e,\textbf{0})$ is guaranteed by Theorem \ref{thm:state_con}.
Therefore, the claimed property is obtained.
\end{proof}

\section{Finite-time Non-overshooting Consensus Control Design}\label{MR2}


\subsection{Asymptotic Non-overshooting Consensus}
For a constant $\lambda\!\!\in\!\!\R_+$, let us introduce the row vectors
\begin{equation*}\begin{aligned}
    h_k\!\!=\!\!-\eta^\top_1\!\!(A\!\!+\!\!\lambda I_n)^{k-1},\;\; k\!\!=\!\!\overline{1,n}
\end{aligned}\end{equation*}
and consider a positive cone 
\begin{equation}\label{eq:lin_positive_cone}\begin{aligned}
    \Omega\!=\!\{e\!\in\!\R^{Nn}\!:\! (I_N\!\otimes\! H)e\!\geq\!{0}\}\!\subset\!\Sigma, \;\; H\!=\!(h^\top_1,\dots,h^\top_n)^\top\!, 
\end{aligned}\end{equation}
where $\Sigma\!\!=\!\!\{e\!\!\in\!\!\R^{Nn}\!:\!(I_N\!\!\otimes\!\!\eta_1^\top)e\!\!\leq\!\!{0}\}$.

\begin{thm}\label{thm:linear}
\textit{ Let $\lambda\!\!\in\!\!\R_+$. Let the transmitted vector $v_i$ be given by \eqref{eq:def_vi}, and the consensus control protocol be defined as
\begin{equation}\label{eq:lin_control}
    u_i\!\!=\!\!-K_{lin} v_i,\;\;
K_{lin}\!\!=\!\!\eta_1^\top\!\!(A\!\!+\!\!\lambda I_n)^n,\;\;i\!\!=\!\!\overline{1,N}.
\end{equation}
The control \eqref{eq:lin_control} asymptotically stabilizes the error equation \eqref{eq:nonovershooterrordy}, and renders $\Omega$ strictly positively invariant for the equation \eqref{eq:nonovershooterrordy}. Moreover, if
\begin{equation}\label{eq:lambda}\begin{aligned}
    \sum\nolimits_{\zeta=0}^{k-1}\!\!C_\zeta^{k-1}\!\!\lambda^\zeta\eta^\top_{k-\zeta}e_i(0)\!\!\leq\!\!{0},\;\; \forall k\!\!=\!\!\overline{2,n},\;\;\forall i\!\!=\!\!\overline{1,N},
\end{aligned}\end{equation}
with $C_\zeta^{k-1}\!\!=\!\!\tfrac{(k-1)!}{(k-1-\zeta)!\zeta!}$ then $e(0)\!\!\in\!\!\Omega$.} 
\end{thm}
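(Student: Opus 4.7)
The plan is to reduce the $Nn$-dimensional error system to $N$ identical decoupled $n$-dimensional blocks, and then separately establish asymptotic stability by pole placement and positive invariance by a Metzler-matrix argument in the coordinates $z_i = H e_i$.

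By Lemma \ref{lem:unique} with $M = I_n$ and $\omega_i = v_i$, the algebraic equation \eqref{eq:def_vi} admits the unique solution $v_i = e_i$, $i = \overline{1,N}$. Substituting into \eqref{eq:lin_control}, the closed-loop error equation decouples componentwise as $\dot e_i = (A - B K_{lin}) e_i$, so it suffices to analyze one $n$-dimensional block. Using $\eta_1^\top A^j = \eta_{j+1}^\top$ for $j < n$ and $\eta_1^\top A^n = \mathbf{0}^\top$, one expands $K_{lin} = \sum_{j=0}^{n-1} \binom{n}{j} \lambda^{n-j} \eta_{j+1}^\top$. Reading the last equation of the integrator chain, the closed-loop characteristic polynomial becomes $s^n + \sum_{j=0}^{n-1} \binom{n}{j} \lambda^{n-j} s^j = (s + \lambda)^n$ by the binomial theorem, so all eigenvalues coincide at $-\lambda < 0$ and global asymptotic (in fact, exponential) stability follows.

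For the strict positive invariance of $\Omega$, set $z_i = H e_i$; since $H$ is lower triangular with $-1$ on the diagonal it is invertible, so the sets $\{H e_i \geq 0\}$ and $\{z_i \geq 0\}$ coincide. The goal is to exhibit a Metzler matrix $M$ with $H(A - B K_{lin}) = M H$. Applying the telescoping identity $(A + \lambda I_n)^{k-1} A = (A + \lambda I_n)^k - \lambda (A + \lambda I_n)^{k-1}$ yields $h_k A = h_{k+1} - \lambda h_k$ for $k < n$ and $h_n A = -K_{lin} - \lambda h_n$. Moreover, $\eta_1^\top (A + \lambda I_n)^{k-1} B = 0$ for $k < n$ and equals $1$ for $k = n$, hence $HB = -\eta_n$; consequently $-HBK_{lin} = \eta_n K_{lin}$, and this added $+K_{lin}$ in the last row precisely cancels the stray $-K_{lin}$ produced by $h_n A$. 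This yields the desired identity $H(A - BK_{lin}) = MH$ with $M$ upper bidiagonal, having $-\lambda$ on the diagonal and $1$ on the superdiagonal, hence Metzler. Since $e^{Mt}$ is then componentwise nonnegative for all $t \geq 0$, $z_i(0) \geq 0$ implies $z_i(t) \geq 0$, giving positive invariance of $\Omega$ for the full error equation.

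Finally, condition \eqref{eq:lambda} matches $H e_i(0) \geq 0$ through the binomial expansion $h_k e_i(0) = -\sum_{\zeta=0}^{k-1} \binom{k-1}{\zeta} \lambda^\zeta \eta_{k-\zeta}^\top e_i(0)$ (the $k=1$ case reduces to $e(0) \in \Sigma$, which is built into the non-overshooting setting). The main technical step I anticipate is identifying the left-multiplication relation $H(A - BK_{lin}) = M H$ with a Metzler $M$: once this structural fact is established, the conclusions follow from classical positive linear system theory applied block-by-block.
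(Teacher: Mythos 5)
Your proposal is correct and follows essentially the same route as the paper: reduction to decoupled blocks via Lemma \ref{lem:unique}, the change of coordinates $\phi_i=He_i$, the similarity identity $H(A-BK_{lin})H^{-1}=A-\lambda I_n$ with a Metzler right-hand side, and the binomial expansion matching \eqref{eq:lambda}. The only differences are presentational — you get invertibility of $H$ from its lower-triangular structure with $-1$ on the diagonal (arguably cleaner than the paper's observability argument) and stability from the characteristic polynomial $(s+\lambda)^n$ rather than from the transformed $\phi$-dynamics — and you rightly flag that the $k=1$ row of $He_i(0)\geq 0$ is not covered by \eqref{eq:lambda} and must come from $e(0)\in\Sigma$.
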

\begin{proof}
Equation \eqref{eq:def_vi} has unique solution $v\!\!=\!\!e$ by recalling Theorem \ref{thm:state_con}, then \eqref{eq:lin_control} is compactly written as $u\!\!=\!\!-(I_N\!\otimes\!K_{lin}) e$, $u\!\!=\!\!(u_1,\dots,u_N)^\top\!\!\!\in\!\!\R^N$, and the error equation \eqref{eq:nonovershooterrordy} becomes
 \begin{equation}\label{eq:error_closed}\begin{aligned}
 	\dot e\!\!=\!\!f(e)
  \!\!=\!\!(I_N\!\!\otimes\!\!(A\!\!-\!\!BK_{lin}))e,\;\;e\!\!=\!\!(e_1^\top\!\!,\dots,e_N^\top\!)^\top
  \end{aligned}\end{equation}
  which is decoupled component-wise satisfying $\dot e_i\!\!=\!\!f_i(e_i)\!\!=\!\!(A\!\!-\!\!BK_{lin})e_i$, $f_i\!\!:\!\!\R^n\!\!\to\!\!\R^n$, $e_i\!\!=\!\!(e_{i,1},\dots,e_{i,n})^\top\!\!\!\in\!\!\R^n$.
Let the barrier function $\phi_{i}\!\!=\!\!(\phi_{i,1},\dots,\phi_{i,n})^\top$, $\phi_{i,k}\!:\!\R^n\!\!\to\!\!\R$ be
\begin{equation}\label{eq:varphidef}\begin{aligned}
    \phi_{i,1}\!\!=\!\!-e_{i,1},\quad \phi_{i,k}\!\!=\!\!\dot\phi_{i,k-1}\!\!+\!\!\lambda\phi_{i,k-1},\;\;k\!\!=\!\!\overline{2,n}\\
\end{aligned}\end{equation}
namely,
    $\phi_{i,k}\!\!=\!\!h_{k-1}\dot e_i\!\!+\!\!\lambda h_{k-1}e_i\!\!=\!\!h_{k-1}A e_i\!\!-\!\!h_{k-1} BK_{lin} e_i\!\!+\!\!\lambda h_{k-1}e_i$.
Since $h_{k-1}B\!\!=\!\!{0}$, then
$\phi_{i,k}\!\!=\!\!h_{k-1}(A\!\!+\!\!\lambda I_n)e_i
    \!\!=\!\!h_ke_i$.
Thus $\phi_i\!\!=\!\!He_i$, 
and can be compactly written as 
$\phi\!\!=\!\!(\phi_1^\top,\dots,\phi^\top_N)^\top\!\!\!\!=\!\!(I_N\!\!\otimes\!\!H)e$.
The positive orthant $\{\phi\!\!\in\!\!\R^{Nn}\!\!:\!\!\phi\!\!\geq\!\!{0}\}$  is identical with the positive cone $\Omega$. Furthermore, $I_N\!\otimes\!H$ is invertible if and only if $H$ is invertible.
Indeed, matrix $H$ is the observability matrix of the pair $(-\eta_1^\top,A\!\!+\!\!\lambda I)$, while yields zero-state observability {\it{i.e.,}} suppose a continuous linear system $\dot x\!\!=\!\!(A\!\!+\!\!\lambda I)x$, $x\!\!\in\!\!\R^n$, $y\!\!=\!\!-\eta_1^\top x\!\!\in\!\!\R$,  if  $y\!\!=\!\!0$, one has $x\!\!=\!\!\textbf{0}$. 
Thus $H$ and $I_N\!\!\otimes\!\!H$ are invertible, and 
$e\!\!=\!\!(I_N\!\!\otimes\!\!H^{-1})\phi$.
Besides, since $h_n B\!\!=\!\!-1$, then
\begin{equation*}\begin{aligned}
    \dot\phi_{i,n}\!\!=\!\!h_{n} \dot e_i
    \!\!=\!\!h_{n}A e_i\!\!-\!\!h_nBK_{lin} e_i
      \!\!=\!\!-\lambda h_n e_i
      \!\!=\!\!-\lambda\phi_{i,n}.
\end{aligned}\end{equation*}
Together \eqref{eq:varphidef} we have 
$    \dot\phi_i\!\!=\!\!(A\!\!-\!\!\lambda I_{n})\phi_i$,
and
\begin{equation}\label{eq:barr_dy}\begin{aligned}
    \dot\phi\!\!=\!\!(I_N\!\!\otimes\!\!(A\!\!-\!\!\lambda I_{n}) )\phi.
\end{aligned}\end{equation}
 Since $I_N\!\!\otimes\!\!(A\!\!-\!\!\lambda I_{n})$ is Metzler, then $\phi$-system \eqref{eq:barr_dy} is positive \cite{farina2011positive} and $\Omega\!\!=\!\!\{\phi\!\!\in\!\!\R^{Nn}\!:\!\phi\!\!\geq\!\!{0}\}$ be strictly positively invariant.

Next is to prove $\phi(e(0))\!\!=\!\!(\phi_1^\top(e_1(0)),\dots,\phi^\top_N(e_N(0)))^\top\!\!\!\geq\!\!{0}$. The latter inequality is guaranteed by $\phi_i(e_i(0))\!\!\geq\!\!{0}$, $\forall i\!\!=\!\!\overline{1,N}$, and $\phi_i(e_i(0))\!\!\geq\!\!{0}$ is guaranteed by $\phi_{i,k}(e_i(0))\!\!\geq\!\!{0}$, $\forall k\!\!=\!\!\overline{1,n}$.
Indeed, for any $k\!\!\geq\!\!2$, we have 
$    \phi_{i,k}(e_i(0))\!\!=\!\!h_ke_i(0)
    \!\!=\!\!\sum\nolimits_{\zeta=0}^{k-1}C_\zeta^{k-1}\lambda^\zeta h_1A^{k\!-\!1\!-\!\zeta}e_i(0)$.
Notice that for $\zeta\!\!=\!\!k\!\!-\!\!1$, we have $h_1A^0\!\!=\!\!-\eta_1^\top$; for $\zeta\!\!=\!\!k\!\!-\!\!2$, we have $h_1A\!\!=\!\!-\eta_2^\top$, \textit{etc.} Then,  
    $\phi_{i,k}(e_i(0))
    \!\!=\!\!-\!\!\sum\nolimits_{\zeta=0}^{k-1}C_\zeta^{k-1}\lambda^\zeta\eta^\top_{k-\zeta}e_i(0)$.
Therefore, $\phi_{i,k}(e_i(0))\!\!\geq\!\!0$, $\forall k\!\!=\!\!\overline{2,n}$, $\forall i\!\!=\!\!\overline{1,N}$  provided \eqref{eq:lambda} holds. The latter implies $\phi(e(0))\!\!\!\geq\!\!{0}$, {\it i.e.,} $e(0)\!\!\in\!\!\Omega$.

Finally, notice that equation \eqref{eq:barr_dy} is asymptotically stabilized to the origin. Recall $e\!\!=\!\!(I_N\!\!\otimes\!\!H^{-1})\phi$, equation \eqref{eq:error_closed} has solution $e(t)\!\!=\!\!(I_N\!\!\otimes\!\!H^{-1})\exp( I_{N}\!\!\otimes\!\!(A\!\!-\!\!\lambda I)t)(I_N\!\!\otimes\!\!H)e(0)$, which means the consensus error $e\!\!\in\!\!\R^{Nn}$ is asymptotically stable. 
\end{proof}

\begin{Coro}
{\it Given $e(0)\!\!\in\!\!\mathrm{int}\Sigma$, inequality \eqref{eq:lambda} always holds if $\lambda\!\!\in\!\!\R_+$ is large enough.} 
\end{Coro}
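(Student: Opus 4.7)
The plan is to view \eqref{eq:lambda}, for each fixed pair $(i,k)$, as a polynomial inequality in $\lambda$ whose leading coefficient is strictly negative whenever $e(0)\!\in\!\mathrm{int}\Sigma$, and then take the maximum of the corresponding thresholds over the finite index set. First I would make explicit that $\mathrm{int}\Sigma$ consists of those $e$ with $\eta_1^\top e_i\!=\!e_{i,1}\!<\!0$ for every $i\!=\!\overline{1,N}$, since $\Sigma$ is cut out of $\R^{Nn}$ by the $N$ linear inequalities $\eta_1^\top e_i\!\leq\!0$, $i\!=\!\overline{1,N}$, so any boundary direction corresponds to equality in at least one of them.

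Second, for fixed $i\!=\!\overline{1,N}$ and $k\!=\!\overline{2,n}$, I regard
\begin{equation*}
p_{i,k}(\lambda)\!:=\!\sum\nolimits_{\zeta=0}^{k-1}C_\zeta^{k-1}\lambda^\zeta\eta^\top_{k-\zeta}e_i(0)
\end{equation*}
as a polynomial in $\lambda$ of degree at most $k\!-\!1$. Its coefficient on $\lambda^{k-1}$ corresponds to $\zeta\!=\!k\!-\!1$ and equals $C_{k-1}^{k-1}\eta_1^\top e_i(0)\!=\!e_{i,1}(0)$, which is strictly negative by the first step. Therefore $p_{i,k}(\lambda)\!\to\!-\infty$ as $\lambda\!\to\!+\infty$, so there exists $\lambda_{i,k}\!>\!0$ with $p_{i,k}(\lambda)\!\leq\!0$ for every $\lambda\!\geq\!\lambda_{i,k}$.

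Third, since the index set $\{(i,k)\!:\!i\!=\!\overline{1,N},\,k\!=\!\overline{2,n}\}$ is finite, setting $\lambda^{\star}\!:=\!\max_{i,k}\lambda_{i,k}$ produces a positive real number for which \eqref{eq:lambda} holds simultaneously for every $(i,k)$, which completes the proof. The entire argument reduces to a ``dominant term'' observation combined with the finiteness of the index set; I do not expect any substantive obstacle. The only small point worth noting is that even if some lower-order coefficients $\eta_{k-\zeta}^\top e_i(0)$ happen to vanish, the $\lambda^{k-1}$-coefficient is always nonzero and strictly negative, so $p_{i,k}$ is genuinely of degree $k\!-\!1$ and the limit at $+\infty$ is indeed $-\infty$.
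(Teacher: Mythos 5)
Your proof is correct and follows essentially the same route as the paper: isolate the $\lambda^{k-1}$ term, whose coefficient $\eta_1^\top e_i(0)=e_{i,1}(0)$ is strictly negative on $\mathrm{int}\Sigma$, and let it dominate for large $\lambda$, then take the maximum threshold over the finitely many pairs $(i,k)$. If anything, you are slightly more careful than the paper, which writes $\phi_{i,1}(0)\geq 0$ where the strict inequality $\phi_{i,1}(0)>0$ is what the dominance argument actually requires.
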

\begin{proof}
   $e(0)\!\!\in\!\!\mathrm{int}\Sigma$ implies $\phi_{i,1}(0)\!\!\geq\!\!0$, $\forall i\!\!=\!\!\overline{1,N}$. Besides, inequality \eqref{eq:lambda} can be written as 
\begin{equation*}\begin{aligned}
    -\lambda^{k-1}\!\!\phi_{i,1}(0)\!\!+\!\!\sum\nolimits_{\zeta=0}^{k-2}C_\zeta^{k-1}\lambda^\zeta\eta^\top_{k-\zeta}e_i(0)\!\!\leq\!\!{0},\;\; \forall k\!\!=\!\!\overline{2,n},\;\forall i\!\!=\!\!\overline{1,N},
\end{aligned}\end{equation*}
which always holds for a sufficient large $\lambda\!\!\in\!\!\R_+$.
\end{proof}


\subsection{Finite-time Non-overshooting Consensus }
 By a proper scaling of the control gain $K_{lin}$, the linear control given by \eqref{eq:lin_control} can be upgraded into a homogeneous one \cite{wang2021generalized,li2023generalized,polyakov2020generalized}. 
In this paper, a similar scheme is developed by using the special structure of the transmitted vector $v_i$, $i\!\!=\!\!\overline{1,N}$.

\begin{thm}\label{thm:finite_nonovershoot}
\it{Let $\mu\!\!\in\!\![-1,0)$. Let $P\!\!\in\!\!\R^{n\!\times\!n}$ be the solution of the following LMI:
\begin{equation}\label{eq:LMI_P}
\begin{aligned}
   P\!\!\succ\!\!0,\quad PG_\dn\!\!+\!\!G_\dn P\!\!\succ\!\!0, \quad P(A\!\!-\!\!BK_{lin})\!\!+\!\!(A\!\!-\!\!BK_{lin})^\top\!\!\! P\!\!\prec\!\!0,
\end{aligned}
\end{equation}
where $G_\dn\!\!=\!\!\diag\{1\!\!-\!\!\mu(n\!\!-\!\!k)\}_{k\!=\!1}^n$,  $K_{lin}$ is defined in \eqref{eq:lin_control}. Let $\dn$ be a dilation group generated by $G_\dn$. Let the transmitted vector $v_i\!\!\in\!\!\R^n$ be defined in \eqref{eq:def_vi},
and the homogeneous consensus control protocol
\begin{equation}\label{eq:homo_control_nonover}
 	u_i\!\!=\!\!-\|v_i\|_{\dn}^{1+\mu}K_{lin}\dn(-\ln\|v_i\|_{\dn})v_i,\;\; i\!\!=\!\!\overline{1,N}
 \end{equation}
with $\|\cdot\|_\dn$ be induced by the weighted Euclidean norm $\|\cdot\|_P$,
such that the error equation \eqref{eq:nonovershooterrordy} be globally finite-time stable and 
the homogeneous cone\footnote{A non-empty set $\Omega_\dn\!\!\in\!\!\R^n$ is said to be a homogeneous cone if $x\!\!\in\!\!\Omega_\dn\!\!\Rightarrow\!\!\dn(s)x\!\!\in\!\!\Omega_\dn$, $\forall s\!\!\in\!\!\R$.}
$$\Omega_{\dn}\!\!=\!\!\{e\!\!\in\!\!\R^{Nn}:\diag\{H\dn(-\ln \|e_i\|_{\dn})\}_{i=1}^Ne\!\!\geq\!\!{0}\}$$ be a strictly positively invariant set for the equation \eqref{eq:nonovershooterrordy}, and $e(0)\!\!\in\!\!\Omega_{\dn}\!\!\subset \!\!\Sigma$
if $e(0)\!\!\in\!\!\Omega$ and $\|e_i(0)\|_{ P}\!\!\leq\!\!1$, $\forall i\!\!=\!\!\overline{1,N}$, with $\Omega\!\!\subset\!\! \Sigma$  and $H$ are defined in \eqref{eq:lin_positive_cone}.
}

\end{thm}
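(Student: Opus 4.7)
The plan has three parts, leaning heavily on Theorem \ref{thm:state_con} and the barrier argument of Theorem \ref{thm:linear}.

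\emph{First, finite-time stability and component-wise decoupling.} I would observe that \eqref{eq:LMI_P} is exactly \eqref{eq:LMI_P_U} under the substitutions $X=P^{-1}$, $Y=K_{lin}X$, giving $K=YX^{-1}=K_{lin}$. Hence Theorem \ref{thm:state_con} applies directly, yielding $v=e$ by Lemma \ref{lem:unique}b), the component-wise decoupling $\dot e_i=f_i(e_i):=(A-\|e_i\|_{\dn}^{1+\mu}BK_{lin}\dn(-\ln\|e_i\|_{\dn}))e_i$, and global finite-time stability of \eqref{eq:nonovershooterrordy}.

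\emph{Second, invariance of $\Omega_\dn$ (the heart of the proof).} I would introduce $y_i:=\dn(-\ln\|e_i\|_{\dn})e_i$, which lies on the $\dn$-unit sphere, and define the barrier $\psi_i:=Hy_i$, so that $e\in\Omega_\dn$ iff $\psi_i\geq 0$ for every $i$. Differentiating along trajectories and using $G_\dn B=B$ together with $A\dn(s)=\exp(\mu s)\dn(s)A$ from \eqref{eq:G_d}, the derivative compactifies to
\[
\dot\psi_i=\bigl[\dot s_i\,HG_\dn H^{-1}+\rho_i^{\mu}(A-\lambda I_n)\bigr]\psi_i,
\]
where $\rho_i=\|e_i\|_{\dn}>0$, $s_i=-\ln\rho_i$, and I reuse the identity $H(A-BK_{lin})=(A-\lambda I_n)H$ extracted from the proof of Theorem \ref{thm:linear}. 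Both scalar coefficients $\rho_i^{\mu}$ and $\dot s_i$ are positive (the latter because the negative-definiteness block of \eqref{eq:LMI_P} forces $\|e_i\|_\dn$ to strictly decrease, as in Theorem \ref{thm:state_con}). The matrix $A-\lambda I_n$ is Metzler by inspection. The main obstacle is to verify that $HG_\dn H^{-1}$ is Metzler too; writing $G_\dn=I_n+\mu D$ with $D=\diag\{-(n-k)\}_{k=1}^n$, and leveraging $AG_\dn=(\mu I_n+G_\dn)A$ together with $h_k=-\eta_1^\top(A+\lambda I_n)^{k-1}$, I would prove by induction that $HG_\dn H^{-1}$ is lower triangular with non-negative strict-sub-diagonal entries (sanity-checked at $n=2$: $\left(\begin{smallmatrix}1-\mu & 0\\ -\lambda\mu & 1\end{smallmatrix}\right)$, whose off-diagonal entry $-\lambda\mu$ is positive because $\mu\in[-1,0)$). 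Metzlerness of the bracketed matrix, combined with the standard positive-linear-systems argument used at the end of the proof of Theorem \ref{thm:linear}, then yields $\psi_i(t)\geq 0$ for all $t\geq 0$ whenever $\psi_i(0)\geq 0$.

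\emph{Third, the initial-condition inclusion and $\Omega_\dn\subset\Sigma$.} By Definition \ref{def:hnorm}, $\|e_i(0)\|_P\leq 1\Leftrightarrow\|e_i(0)\|_\dn\leq 1$, hence $s_i(0)\geq 0$. Writing $\psi_i(e_i(0))=(H\dn(s_i(0))H^{-1})(He_i(0))$ and using $He_i(0)\geq 0$ from $e(0)\in\Omega$, it suffices to show that $H\dn(s)H^{-1}\geq 0$ entrywise for $s\geq 0$. This follows from $\tfrac{d}{ds}(H\dn(s)H^{-1})=(HG_\dn H^{-1})(H\dn(s)H^{-1})$ with initial value $I_n$, combined with the Metzler property of $HG_\dn H^{-1}$ already proved. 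Finally, $\Omega_\dn\subset\Sigma$ is immediate, since the first component of $\psi_i$ equals $-\exp(s_i(1-\mu(n-1)))(e_i)_1$, whose non-negativity forces $(e_i)_1\leq 0$, i.e.\ $e\in\Sigma$. The Metzlerness of $HG_\dn H^{-1}$ is the single bottleneck; once it is in hand, every other step is routine.
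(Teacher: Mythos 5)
Your proposal is correct and follows essentially the same route as the paper: reduction to Theorem \ref{thm:state_con} for finite-time stability, the homogeneous barrier $\phi_i=H\dn(-\ln\|e_i\|_{\dn})e_i$ whose dynamics become Metzler via $H(A-BK_{lin})=(A-\lambda I_n)H$ together with the Metzlerness of $HG_\dn H^{-1}$, and the auxiliary flow $s\mapsto H\dn(s)x$ for the initial-condition inclusion. The induction you defer is settled in the paper by the one-line identity $h_kG_\dn=(1-\mu(n-k))h_k-\lambda\mu(k-1)h_{k-1}$, i.e.\ $HG_\dn=\Gamma H$ with $\Gamma=G_\dn+\lambda\,\diag\{-\mu(k-1)\}_{k=1}^nA^\top$, which confirms your lower-bidiagonal, non-negative-subdiagonal claim.
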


\begin{proof}
Recall Theorem \ref{thm:state_con}, equation \eqref{eq:def_vi} has a unique solution $v\!\!=\!\!e$, then the equation \eqref{eq:homo_control_nonover} becomes
\begin{equation*}\begin{aligned}
u\!\!&=\!\!-\diag\{\|e_i\|_{\dn}^{1+\mu}\!\!K_{lin}\dn(\!\!-\!\!\ln\!\!\|e_i\|_{\dn})\}_{i=1}^Ne,\;\;u\!\!=\!\!(u_1,\dots,u_N)^\top\!\!\!\in\!\!\R^N.
\end{aligned} \end{equation*} 
Thus the closed-loop error equation \eqref{eq:nonovershooterrordy} becomes
\begin{equation}\label{eq:exp_error_nonover}\begin{aligned}
 	\dot e&\!\!=\!\!f(e)\\
    &\!\!=\!\!\!\left(\!\!I_N\!\!\otimes\!\!A\!\!-\!\!\diag\{\|(\xi_i^\top\!\!\!\!\otimes\!\!I_n)e\|_{\dn}^{1+\mu}\!\!BK_{lin}\dn(-\!\!\ln\!\!\|(\xi_i^\top\!\!\!\!\otimes\!\!I_n)e\|_{\dn})\}_{i=1}^N\!\!\right)\!\!e.
\end{aligned} \end{equation}
Similar with \eqref{eq:exp_error_dy}, the error equation \eqref{eq:exp_error_nonover} is component-wise decoupled and the stability of $e\!\!\in\!\!\R^{Nn}$ is equivalent to $e_i\!\!\in\!\!\R^n$, $i\!\!=\!\!\overline{1,N}$. Select $\|e_i\|_\dn$ as the Lyapunov candidate, and repeat the analysis in Theorem \ref{thm:state_con}, we can obtain the homogeneity of $e\!\!\mapsto\!\!f(e)$ and $e_i\!\!\mapsto\!\!f_i(e_i)$ as well as $\tfrac{d\|e_i\|_\dn}{dt}\!\!<\!\!0$, $i\!\!=\!\!\overline{1,N}$ since \eqref{eq:LMI_P} holds. Thus the homogeneous error equation 
\eqref{eq:exp_error_nonover} is globally finite-time stable for $\mu\!\!\in\!\![-1,0)$.

Consider the homogeneous barrier function $\phi_i\!\!:\!\!\R^{n}\!\!\to\!\!\R^{n}$ as 
$\phi_i(e_i)\!\!=\!\!H\dn(-\ln \|e_i\|_{\dn})e_i$. Then, 
\begin{equation*}\begin{aligned}
\tfrac{d\phi_i}{dt}\!\!=\!\!-\!\tfrac{\frac{d \|e_i\|_{\dn}}{dt}}{\|e_i\|_{\dn}}&HG_\dn\dn(-\ln \|e_i\|_{\dn})e_i\\
&\!\!+\!H\dn(-\ln \|e_i\|_{\dn})\!\!\!\left(
\!\!A\!\!-\!\!\|e_i\|_{\dn}^{1+\mu}\!BK_{lin}\dn(-\ln\|e_i\|_{\dn})\!\!\right)\!\!e_i.
\end{aligned}\end{equation*}
Reusing $\dn(s)A\!\!=\!\!\exp(-\mu s)A\dn(s)$, $\dn( s)B\!\!=\!\!\exp(s)B$, $\forall s\!\!\in\!\!\R$ has
\begin{equation*}\begin{aligned}
&\tfrac{d\phi_i}{dt}
\!\!=\!\!\\&-\!\tfrac{\frac{d \|e_i\|_{\dn}}{dt}}{\|e_i\|_{\dn}}HG_\dn\dn(-\ln \|e_i\|_{\dn})e_i\!\!
+\!\!\|e_i\|_{\dn}^\mu H(A\!\!-\!\!BK_{lin})\dn(-\ln\|e_i\|_{\dn})e_i,
\end{aligned}\end{equation*}
and straightforwardly
\begin{equation}\label{eq:add}\begin{aligned}
\tfrac{d\phi_i}{dt}
\!\!=\!\!\|e_i\|_{\dn}^\mu H(A\!\!-\!\!BK_{lin}\!\!+\!\gamma_i G_\dn)\dn(-\ln \|e_i\|_{\dn})e_i,
\end{aligned}\end{equation}
where $\gamma_i\!\!=\!\!-\|e_i\|_{\dn}^{-\mu-1}\!\frac{d \|e_i\|_{\dn}}{dt}\!\!>\!\!0$,  $ e_i\!\!\in\!\!\R^{n}\backslash\{\textbf{0}\}$, $\forall i\!\!=\!\!\overline{1,N}$.
According to Theorem \ref{thm:linear}, 
$H(A\!\!-\!\!BK_{lin})H^{-1}\!\!=\!\!A\!\!-\!\!\lambda I_{n}$, which implies $H(A\!\!-\!\!BK_{lin})\!\!=\!\!(A\!\!-\!\!\lambda I_{n})H$. Besides, 
$HG_\dn\!\!=\!\!H(I_n\!\!+\!\!\Pi)\!\!=\!\!H\!\!+\!\!(
   \Pi^\top   h_1^\top,
     \ldots,
    \Pi^\top h_n^\top  
)^\top$, $\Pi\!\!=\!\!\diag\{\!-\!\mu(n\!\!-\!\!k)\}_{k\!=\!1}^n$.
We notice 
$h_k  \Pi \!\!=\!\!-\mu(n\!\!-\!\!k)h_k\!\!-\!\!\lambda\mu(k\!\!-\!\!1) h_{k-1}$,
which indicates 
$
HG_\dn
\!\!=\!\! \Gamma H$
with $\Gamma\!\!=\!\!G_\dn \!\!+\!\lambda\diag\{-\!\mu(k\!\!-\!\!1)\}_{k=1}^n A^\top$.
Then \eqref{eq:add} becomes
\begin{equation}\label{eq:homo_phi_i}\begin{aligned}
\tfrac{d\phi_i}{dt}
\!\!=\!\!\|e_i\|_{\dn}^\mu(A\!\!-\!\!\lambda I_{n}\!\!+\!\!\gamma_i\Gamma)\phi_i.
\end{aligned}\end{equation}
Notice that $\Gamma\!\!\geq\!\!0$ for $\mu\!\!\in\!\![-1,0)$. This directly implies $A\!\!-\!\!\lambda I_{n}\!\!+\!\!\gamma_i\Gamma$ is Metzler.
Let $\phi_i$ be compactly presented as $\phi\!\!=\!\!(\phi_1^\top,\dots,\phi_N^\top)^\top\!\!=\!\!\diag\{H\dn(-\ln \|e_i\|_{\dn})\}_{i=1}^Ne$, with the dynamics
\begin{equation}\label{eq:homo_phi}\begin{aligned}
\tfrac{d\phi}{dt}
\!\!=\!\!(
     \tfrac{d^\top\!\!\!\phi_1}{dt},
     \ldots,
     \tfrac{d^\top\!\!\!\phi_N}{dt} 
)^\top\!\!\!\!=\!\!\diag\{\|e_i\|_{\dn}^\mu(A\!\!-\!\!\lambda I_{n}\!\!+\!\!\gamma_i\Gamma)\}_{i=1}^N\phi\!\!:=\!\!\Lambda\phi.
\end{aligned}\end{equation}
Thus $\phi$-system \eqref{eq:homo_phi} is positive since $\Lambda\!\!\in\!\!\R^{Nn\!\times\!Nn}$ being Metzler \cite{farina2011positive}, and the homogeneous cone $\Omega_{\dn}\!\!=\!\!\{\phi\!\!\in\!\!\R^{Nn}:\phi\!\!\geq\!\!{0}\}$ is a strictly positively invariant set, which means for $e(0)\!\!\in\!\!\Omega_{\dn}\!\!\Rightarrow\!\!e(t)\!\!\in\!\!\Omega_{\dn}$, $\forall t\!\!\in\!\!\R_+$. Moreover, $\Omega_{\dn}\!\!\subset\!\!\Sigma$ by construction.

Next, we are supposed to prove $e(0)\!\!\in\!\!\Omega_{\dn}$, {\it{i.e.,}} $\phi(e(0))\!\!\geq\!\!{0}$, with
$\phi(e(0))\!\!=\!\!\diag\{H\dn(-\ln\|e_i(0)\|_\dn)\}_{i=1}^Ne(0)$.
On the one hand, for all 
$i\!\!=\!\!\overline{1,N}$, $\|e_i(0)\|_P\!\!\leq\!\!1\!\!\Leftrightarrow\!\!\|e_i(0)\|_{\dn}\!\!\leq\!\!1\!\!\Leftrightarrow\!\!-\ln \|e_i(0)\|_{\dn}\!\!\geq\!\!0$ and $\|e_i(0)\|_P\!\!=\!\!1\!\!\Leftrightarrow\!\!\|e_i(0)\|_{\dn}\!\!=\!\!1\!\!\Leftrightarrow\!\!-\ln \|e_i(0)\|_{\dn}\!\!=\!\!0\!\!\Leftrightarrow\!\!\phi(e(0))\!\!=\!\!(I_N\!\!\otimes\!\!H)e(0)$. 
On the other hand, 
let $\phi_s:\R\!\!\to\!\!\R^{n}$ with an auxiliary function be
$\phi_s(s)\!\!=\!\!H\dn(s)x$, $s\!\!\in\!\!\R_{\geq0}$, $x\!\!\in\!\!\R^{n}$,
yields $\phi_s(0)\!\!=\!\!Hx\!\!\geq\!\!{0}$. We prove $\phi_s(s)$ is non-decreasing on $s\!\!\in\!\!\R_{\geq0}$. Indeed, 
$\tfrac{d\phi_s}{ds}
\!\!=\!\!H G_\dn\dn(s)x
\!\!=\!\!\Gamma \phi_s$,
with
$\Gamma\!\!=\!\!G_\dn \!\!+\!\lambda\diag\{-\!\mu(k\!\!-\!\!1)\}_{k=1}^n A^\top$.
Notice that for $\mu\!\!\in\!\![-1,0)$, $\Gamma$ is anti-Hurwitz since it is a lower-triangular matrix with positive diagonal entries.
Therefore,  
$\phi_s(s)\!\!\geq\!\!{0}$, $\forall s\!\!\in\!\!\R_{\geq0}$. 
Based on the above, we have $\phi_i(e_i(0))\!\!\geq\!\!{0}$ if 
$\|e_i(0)\|_P\!\!\leq\!\!1$ and $He_i(0)\!\!\geq\!\!{0}$, $\forall i\!\!=\!\!\overline{1,N}$, and it is concluded as $\phi(e(0))\!\!\geq\!\!{0}$ provided 
$\|e_i(0)\|_P\!\!\leq\!\!1$, $\forall i\!\!=\!\!\overline{1,N}$ and $e(0)\!\!\in\!\!\Omega$.
\end{proof}

\subsection{Robustness Analysis}\label{RA}
Now we move to the disturbed case by considering the model uncertainty defined in \eqref{eq:dynamic_disturb}. The error equation is \begin{equation}\label{eq:exp_error_dy_disturb_non}\begin{aligned}
 	&\dot e\!\!=\!\!f(e,q)\\
  &\!\!=
    \!\!\!\left(\!\!I_N\!\!\otimes\!\!A
    \!\!-\!\!\diag\{\|\!(\xi_i^\top\!\!\!\!\otimes\!\!I_n)e\|_{\dn}^{1\!+\!\mu}\!BK_{lin}\dn(\!-\!\!\ln\!\!\|\!(\xi_i^\top\!\!\!\!\otimes\!\!I_n)e\|_{\dn})\}_{i=1}^N\!\!\right)\!\!e\!\!+\!\!q.
\end{aligned} \end{equation}
 with $q\!\!=\!\!((q_{1}\!\!-\!\!q_0)^\top,\dots,(q_{N}\!\!-\!\!q_0)^\top)^\top\!\!\!\in\!\!L^\infty\!(\R,\R^{Nn})$.

 Before analysis, according to \cite{boyd1994linear}, we claim $P(A\!\!-\!\!BK_{lin})\!\!+\!\!(A\!\!-\!\!BK_{lin})^\top\!\!\! P\!\!\prec\!\!0$ defined in \eqref{eq:LMI_P} of Theorem \ref{thm:finite_nonovershoot} implies that there exists $\rho\!\!\in\!\!\R_+$ such that
 \begin{equation}\label{eq:LMI_new}
    P(A\!\!-\!\!BK_{lin})\!\!+\!\!(A\!\!-\!\!BK_{lin})^\top\!\!\! P\!\!+\!\!\rho P\!\!\prec\!\!0.
 \end{equation}

\begin{Coro}\label{prop:1}
    \textit{Let the conditions of Theorem \ref{thm:finite_nonovershoot} hold. Then,
    \\
    $a)$ for $\mu\!\!\in\!\!(-1,0)$, the closed-loop error equation \eqref{eq:exp_error_dy_disturb_non} is ISS with respect to $q\!\!\in\!\!L^\infty\!(\R,\R^{Nn})$;
    \\
    $b)$ for $\mu\!\!\in\!\!(-1,0)$, and $q\!\!\in\!\!L^\infty\!(\R,\R_{-}^{Nn})$, the homogeneous cone
     $\Omega_{\dn}\!\!=\!\!\{e\!\!\in\!\!\R^{Nn}\!:\!\diag\{H\dn(-\ln \|e_i\|_{\dn})\}_{i=1}^Ne\!\!\geq\!\!{0}\}$ is  strictly positively invariant for equation \eqref{eq:exp_error_dy_disturb_non};
    \\
    $c)$ for $n\!\!=\!\!2$, $\mu\!\!=\!\!-1$, and $q_i\!\!=\!\!B\hat q_i$, $\hat q_i\!\!\in\!\!L^\infty(\R,\R)$, $i\!\!=\!\!\overline{0,N}$
    the error equation \eqref{eq:exp_error_dy_disturb_non} is globally finite-time stable and the homogeneous cone $\Omega_{\dn}\!\!=\!\!\{e\!\!\in\!\!\R^{2N}\!:\!\diag\{H\dn(-\ln \|e_i\|_{\dn})\}_{i=1}^Ne\!\!\geq\!\!{0}\}$ is  strictly positively invariant  for the error equation \eqref{eq:exp_error_dy_disturb_non} if 
    \begin{equation}\label{eq:upperq}
     \|\hat q_i\!\!-\!\!\hat q_0\|_{L^\infty}\!\!\leq\!\!\min\left\{\tfrac{\rho}{2|P^{1/2}|},\tfrac{\lambda\vartheta\;\lambda_{\min}(P^{-1/2}H^\top HP^{-1/2})}{\lambda^{1/2}_{\max}(P^{-1/2}\eta_1\eta_1^\top P^{-1/2})}\right\},\;\;\forall i\!\!=\!\!\overline{1,N}
    \end{equation}
    with $\rho\!\!\in\!\!\R_+$ defined in \eqref{eq:LMI_new}, 
and $\vartheta\!\!=\!\!\tfrac{\rho}{2\lambda_{\max}(P^{-1/2}(PG_\dn \!+\!G_\dn P)P^{-1/2})}$.
}
    
\end{Coro}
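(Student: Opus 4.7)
The three claims share a common strategy: extend the Lyapunov and barrier-function apparatus of Theorem \ref{thm:finite_nonovershoot} to accommodate the disturbance $q$, then invoke either the ISS lemma (for (a)) or a Nagumo-type positive-invariance argument (for (b), (c)).

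For claim (a), my plan is to mirror Corollary \ref{remark:ISS} applied to the homogeneous non-overshooting controller \eqref{eq:homo_control_nonover} instead of \eqref{eq:homo_control}. Since the disturbance enters additively and the closed-loop field under \eqref{eq:homo_control_nonover} is $\tdn$-homogeneous of degree $\mu$ (Theorem \ref{thm:finite_nonovershoot}), the extended field $f_q(e,q)=(f^\top(e,q),\mathbf{0}_{Nn}^\top)^\top$ is homogeneous of degree $\mu$ under the dilation $(e,q)\mapsto(\tdn(s)e,\exp(\mu s)\tdn(s)q)$. Global asymptotic stability at $q\equiv\mathbf{0}$ is supplied by Theorem \ref{thm:finite_nonovershoot}, and $\mu>-1$ keeps $f_q$ continuous, so Lemma \ref{lem:ISS} yields ISS directly.

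For claim (b), I would recompute $\dot\phi_i$ with $\phi_i=H\dn(-\ln\|e_i\|_\dn)e_i$ in the disturbed case, following the manipulations of Theorem \ref{thm:finite_nonovershoot}. Using $\dn(s)A=\exp(-\mu s)A\dn(s)$ and $\dn(s)B=\exp(s)B$, the cross terms involving $HBK_{lin}$ cancel exactly as in the undisturbed case, and I obtain $\dot\phi_i=\|e_i\|_\dn^\mu(A-\lambda I_n+\gamma_i\Gamma)\phi_i+H\dn(-\ln\|e_i\|_\dn)(q_i-q_0)$. The decisive new observation is the sign structure of $H\dn(-\ln\|e_i\|_\dn)$: each row $h_k=-\eta_1^\top(A+\lambda I_n)^{k-1}$ is componentwise non-positive because $A+\lambda I_n$ has non-negative entries, and right-multiplication by the positive-diagonal $\dn(-\ln\|e_i\|_\dn)$ preserves the sign. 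Hence the forcing $H\dn(-\ln\|e_i\|_\dn)(q_i-q_0)\geq\mathbf{0}$ whenever $q_i-q_0\leq\mathbf{0}$. Combined with the Metzler structure of $A-\lambda I_n+\gamma_i\Gamma$ established in Theorem \ref{thm:finite_nonovershoot}, the sub-tangent condition on $\partial\Omega_\dn$ is satisfied componentwise and strict positive invariance follows.

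For claim (c), with $n=2$, $\mu=-1$, and matched $q_i=B\hat q_i$, the bound \eqref{eq:upperq} drives two parallel estimates. Finite-time stability: take $V_i=\|e_i\|_P$; by \eqref{eq:LMI_new} the undisturbed part of $\dot V_i$ admits a strict-decrease margin proportional to $\rho$, while the matched disturbance contributes at most $|P^{1/2}|\,\|\hat q_i-\hat q_0\|_{L^\infty}$, which the first term in \eqref{eq:upperq} dominates by $\rho/2$; combining gives $\dot V_i\leq -c<0$ along Filippov solutions, hence finite-time convergence. Positive invariance: reuse the barrier computation of (b) but with forcing $H\dn(-\ln\|e_i\|_\dn)B(\hat q_i-\hat q_0)$, which no longer has a priori sign; the second term in \eqref{eq:upperq} is exactly what is needed, after lower-bounding the stabilizing $\gamma_i\Gamma\phi_i$ contribution via $\vartheta$ (coming from the decrease margin $\rho$ and the spectrum of $P^{-1/2}(PG_\dn+G_\dn P)P^{-1/2}$) and upper-bounding the disturbance term using the norm-equivalence constants $\lambda_{\min}(P^{-1/2}H^\top HP^{-1/2})$ and $\lambda_{\max}^{1/2}(P^{-1/2}\eta_1\eta_1^\top P^{-1/2})$, to guarantee that $[\dot\phi_i]_k\geq 0$ on $\partial\Omega_\dn$.

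The main obstacle is the positive-invariance part of (c). Unlike (b), where a componentwise sign lemma combined with Metzler structure closes the argument, the matched disturbance $B(\hat q_i-\hat q_0)$ has no componentwise sign, so invariance must be established quantitatively by matching the stabilizing boundary-forcing term against the disturbance in precisely the spectral norms that appear in \eqref{eq:upperq}. Extracting those specific eigenvalue constants — rather than looser surrogates — requires a careful manipulation of the homogeneous-norm gradient at $\partial\Omega_\dn$ and a tight norm-equivalence between $\|\cdot\|_P$ and the barrier $\|H\cdot\|$.
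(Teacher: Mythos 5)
Your proposal tracks the paper's proof almost step for step: claim (a) via the homogeneity/ISS argument of Corollary \ref{remark:ISS}; claim (b) via the disturbed barrier dynamics together with the sign observation that $H\leq 0$ componentwise, hence $H\dn(-\ln\|e_i\|_{\dn})(q_i-q_0)\geq{0}$ whenever $q_i-q_0\leq{0}$; and claim (c) via the two parallel estimates that generate the two terms in \eqref{eq:upperq}. Two concrete points need repair, however.

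First, in (c) you take $V_i=\|e_i\|_P$. The LMI \eqref{eq:LMI_new} controls the quadratic form $z^\top\!\left(P(A-BK_{lin})+(A-BK_{lin})^\top P\right)z$ only for the rescaled state $z=\dn(-\ln\|e_i\|_{\dn})e_i$ lying on the sphere $\|z\|_P=1$, which is exactly what appears in the derivative of the \emph{canonical homogeneous norm} $\|e_i\|_{\dn}$ — the Lyapunov function the paper actually uses. Away from that sphere the homogeneous feedback is $-\|e_i\|_{\dn}^{1+\mu}K_{lin}\dn(-\ln\|e_i\|_{\dn})e_i$, not $-K_{lin}e_i$, so your claimed ``strict-decrease margin proportional to $\rho$'' for $\dot V_i$ does not follow from \eqref{eq:LMI_new}; moreover, for $\mu=-1$ finite-time convergence is concluded from the uniform bound $\tfrac{d\|e_i\|_{\dn}}{dt}\leq-\vartheta$, which is where the constant $\vartheta$ in \eqref{eq:upperq} comes from. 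Second, in (b) you import the Metzler property of $A-\lambda I_n+\gamma_i\Gamma$ from Theorem \ref{thm:finite_nonovershoot}, but that property hinges on $\gamma_i=-\|e_i\|_{\dn}^{-\mu-1}\tfrac{d\|e_i\|_{\dn}}{dt}>0$, and in the disturbed system $\tfrac{d\|e_i\|_{\dn}}{dt}$ acquires an additional term generated by $q$; the paper first argues that $\tfrac{d\|e_i\|_{\dn}}{dt}<0$ persists for $q\in L^\infty(\R,\R_{-}^{Nn})$ before invoking the Metzler structure, and your argument needs the analogous step rather than a bare citation of the undisturbed case.
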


\begin{proof}
Claim a) is obtained by recalling Corollary \ref{remark:ISS}.

In addition, since $q\!\!\in\!\!L^\infty\!(\R,\R_{-}^{Nn})$, claim a) implies $\tfrac{d\|e_i\|_\dn}{dt}\!\!<\!\!0$, $\forall i\!\!=\!\!\overline{1,N}$ on $e\!\!\in\!\!\R^{Nn}$. Recall Theorem \ref{thm:finite_nonovershoot}, $\Lambda\!\!\in\!\!\R^{Nn\!\times\!Nn}$ defined in \eqref{eq:homo_phi} is Metzler on $e\!\!\in\!\!\R^{Nn}$. 
Consider the homogeneous barrier function $\phi_i\!\!:\!\!\R^{n}\!\!\to\!\!\R^{n}$ as 
$\phi_i(e_i)\!\!=\!\!H\dn(-\ln \|e_i\|_{\dn})e_i$.
Then, 
\begin{equation}\label{eq:homo_phi_iss}\begin{aligned}
\tfrac{d\phi}{dt}&\!\!=\!\!\Lambda\phi
\!\!-\!\!(I_N\!\!\otimes\!\!H)\diag\{\dn(-\!\!\ln \!\!\|e_i\|_{\dn})\}_{i=1}^N \tilde{q},\;\;\phi\!\!=\!\!(\phi_1^\top,\dots,\phi_N^\top)^\top
\end{aligned}\end{equation}
where $\tilde q\!\!=\!\!-q\!\!\in\!\!L^\infty\!(\R,\R^{Nn}_{+})$. 
Recall the definition of $H$ in  \eqref{eq:lin_positive_cone}, the matrix $\!\!-\!(I_N\!\!\otimes\!\!H)\diag\{\dn(-\ln \|e_i\|_{\dn})\}_{i=1}^N\!\!\geq\!\!0$. Therefore, the $\phi$-system \eqref{eq:homo_phi_iss} is positive \cite{farina2011positive}, and claim b) is obtained.

For claim c), we have $q\!\!=\!\!(I_N\!\!\otimes\!\!B)(\hat q_1\!\!-\!\!\hat q_0,\ldots,\hat q_N\!\!-\!\!\hat q_0)^\top$ in \eqref{eq:exp_error_dy_disturb_non}. 
It is clear the stability of error equation \eqref{eq:exp_error_dy_disturb_non} is equivalent to the stability of $\dot e_i\!\!=\!\!f_i(e_i,B(\hat q_i\!\!-\!\!\hat q_0))$, $i\!\!=\!\!\overline{1,N}$, $f_i\!\!:\!\!\R^2\!\!\times\!\!\R^2\!\!\to\!\!\R^2$. Let $\|e_i\|_\dn$ be the Lyapunov candidate, whose derivative along $\dot e_i\!\!=\!\!f_i(e_i,B(\hat q_i\!\!-\!\!\hat q_0))$ be
\begin{equation*}\begin{aligned}
&\tfrac{d \|e_i\|_{\dn}}{d t}\\
&\!\!=\!\!\!\tfrac{e_i^{\top}\!\!\!\dn^{\top}\!\!\!(\!\!-\!\!\ln \!\|e_i\|_\dn\!) (P(A\!-\!BK_{lin})\!+\!(A\!-\!BK_{lin} )^\top\!\!\!P)\dn(\!\!-\!\!\ln \!\|e_i\|_\dn\!)e_i\!+\!e_i^{\top}\!\!\dn^{\top}\!\!\!(\!\!-\!\!\ln \!\|e_i\|_\dn\!) PB(\hat q_i\!-\!\hat q_0) }
{e_i^{\top}\dn^{\top}\!\!\!(-\ln \|e_i\|_\dn)(PG_{\dn}+G_\dn P)\dn(-\ln \|e_i\|_\dn)e_i}.
	\end{aligned}\end{equation*}
It is noticeable that, in the latter equation, $e_i^{\top}\!\!\!\dn^{\top}\!\!\!(\!\!-\!\!\ln \!\|e_i\|_\dn\!) (P(A\!\!-\!BK_{lin})\!+\!(A\!-\!BK_{lin} )^\top\!\!\!P)\dn(\!\!-\!\!\ln \!\|e_i\|_\dn\!)e_i\!\!<\!\!-\rho$ since \eqref{eq:LMI_new} holds and $\|\dn(\!\!-\!\!\ln \!\|e_i\|_\dn\!)e_i\|_P\!\!=\!\!1$. In addition, $e_i^{\top}\!\!\dn^{\top}\!\!\!(\!\!-\!\!\ln \!\|e_i\|_\dn\!) PB(\hat q_i\!-\!\hat q_0)\!\!\leq\!\!\tfrac{\rho}{2}$ since \eqref{eq:upperq} holds.  Then $\tfrac{d \|e_i\|_{\dn}}{d t}\!\!\leq\!\!-\vartheta$, $\forall i\!\!=\!\!\overline{1,N}$, and the error equation \eqref{eq:exp_error_dy_disturb_non} is globally finite-time stable \cite{polyakov2020generalized}.
Moreover, since $\dn( s)B\!\!=\!\!\exp(s)B$, $\forall s\!\!\in\!\!\R$, $HB\!\!=\!\!-B$, the homogeneous barrier function  
$\phi_i(e_i)\!\!=\!\!H\dn(-\ln \|e_i\|_{\dn})e_i$ yields
\begin{equation}\label{eq:homo_phi_iss_mu-1}\begin{aligned}
\tfrac{d\phi}{dt}
\!\!=\!\!\diag\left\{\|e_i\|_{\dn}^{-1}\left(\begin{smallmatrix}
    2\gamma_i\!-\!\lambda &  1\\
   \lambda\gamma_i\!-\!\tfrac{(\hat q_i\!-\!\hat q_0)\phi_{i,1}}{|\phi_i|^2}  & \gamma_i\!-\!\lambda\!-\!\tfrac{(\hat q_i\!-\!\hat q_0)\phi_{i,2}}{|\phi_i|^2}
\end{smallmatrix}\right)\right\}_{i=1}^N\!\!\phi
\end{aligned}\end{equation}
with $\phi\!\!=\!\!(\phi_1^\top,\dots,\phi_N^\top)^\top$, 
$\gamma_i\!\!=\!\!-\!\tfrac{d \|e_i\|_{\dn}}{dt}\!\!>\!\!0$,
$\forall i\!\!=\!\!\overline{1,N}$. Moreover, the system matrix of the $\phi$-system \eqref{eq:homo_phi_iss_mu-1} is Metzler if
\begin{equation}\label{eq:Metz}
    \lambda\gamma_i\!\!-\!\!\tfrac{(\hat q_i-\hat q_0)\phi_{i,1}}{|\phi_i|^2}\!\!\geq\!\!0, \;\;\forall i\!\!=\!\!\overline{1,N}
\end{equation}
which holds if $\|\hat q_i\!\!-\!\!\hat q_0\|_{L^\infty}\!\!\leq\!\!\tfrac{\lambda\gamma_i|\phi_i|^2}{\phi_{i,1}}$, $\forall i\!\!=\!\!\overline{1,N}$. We notice $\gamma_i\!\!\geq\!\!\vartheta$, $\forall i\!\!=\!\!\overline{1,N}$, 
 $\lambda^{1/2}_{\min}(P^{-1/2}\eta_1\eta_1^\top P^{-1/2})\!\!\leq\!\!\phi_{i,1}\!\!\leq\!\!\lambda^{1/2}_{\max}(P^{-1/2}\eta_1\eta_1^\top P^{-1/2})$, $\lambda_{\min}(P^{-1/2}H^\top HP^{-1/2})\!\!\leq\!\!|\phi_{i}|^2\!\!\leq\!\!\lambda_{\max}(P^{-1/2}H^\top H P^{-1/2})$. Then, \eqref{eq:Metz} holds if 
 $\|\hat q_i\!-\!\hat q_0\|_{L^\infty}\!\!\leq\!\!\tfrac{\lambda\vartheta\lambda_{\min}(P^{-1/2}H^\top HP^{-1/2})}{\lambda^{1/2}_{\max}(P^{-1/2}\eta_1\eta_1^\top P^{-1/2})}$,
as \eqref{eq:upperq} claims. Thus the $\phi$-system \eqref{eq:homo_phi_iss_mu-1} is positive \cite{farina2011positive}, and claim c) is proved. 
\end{proof}

\section{Simulation Results}\label{SR}

The simulation is given under a scenario as Fig. \ref{fig:scenario}, in which a Multi-Robot System (MRS), comprised of one leader (robot 0) and three followers (robot $\overline{1,3}$).
The robots are omnidirectional and modeled on the point of mass subjecting to the following dynamics, with their motion in the X- and Y-coordinates considered separately,
\begin{equation}\label{eq:MRS}\begin{aligned}
\begin{array}{ll}
     \dot X_i(t)\!\!=\!\!AX_i(t)\!\!+\!\!Bu_{X_i}(t)\!\!+\!\!Bq_{X_i}(t),  \\
     \dot Y_i(t)\!\!=\!\!AY_i(t)\!\!+\!\!Bu_{Y_i}(t)\!\!+\!\!Bq_{Y_i}(t),
\end{array}\;\;
    A\!\!=\!\!\left(\begin{smallmatrix}
    0 & 1\\
    0 & 0
\end{smallmatrix}\right),\;\; B\!\!=\!\!\left(\begin{smallmatrix}
    0 \\
    1
\end{smallmatrix}\right),\;\; i\!\!=\!\!\overline{0,3}.
\end{aligned}\end{equation}
where $X_i\!\!=\!\!(X_{i,1},X_{i,2})^\top\!\!\!\!\in\!\!\R^2$, $Y_i\!\!=\!\!(Y_{i,1},Y_{i,2})^\top\!\!\!\!\in\!\!\R^2$, $(X_{i,1},Y_{i,1})$ represents the position of robot $i$; $X_{i,2}$ and $Y_{i,2}$ represent the velocity of robot $i$ along X- and Y-coordinate, respectively; $u_{X_i}\!\!\in\!\!\R$ and $u_{Y_i}\!\!\in\!\!\R$ are the control input of robot $i$ with $u_{X_0}\!\!=\!\!u_{Y_0}\!\!=\!\!0$; $q_{X_i},q_{Y_i}\!\!\in\!\!\R$ represents some matched disturbance.


\subsection{Finite-time Non-overshooting Consensus}\label{Sim:non-dis}

In MRS \eqref{eq:MRS}, we let $q_{X_i}\!\!=\!\!q_{Y_i}\!\!=\!\!0$. The leader moves on the smooth barrier surface with a constant velocity, and the followers aim to achieve consensus with the leader. During this process, the followers must be guaranteed a safe move, \textit{i.e.,} followers must keep themself behind the leader in X-coordinate. To this end,
  we aim to design some proper $u_{X_i}$ and $u_{Y_i}$, $i\!\!=\!\!\overline{1,3}$ such that the finite-time leader-following consensus is achieved without overshoots in X-coordinate, \textit{i.e.,} let  $e_X\!\!=\!\!(e_{X_1}^\top,e_{X_2}^\top,e_{X_3}^\top)^\top$, $e_{X_i}\!\!=\!\!X_i\!\!-\!\!X_0$, $e_Y\!\!=\!\!(e_{Y_1}^\top,e_{Y_2}^\top,e_{Y_3}^\top)^\top$, $e_{Y_i}\!\!=\!\!Y_i\!\!-\!\!Y_0$,
  $e_{X}$ and $e_{Y}$  be globally finite-time stable, and for $e_{X}(0)\!\!\in\!\!\Omega\!\!\subset\!\!\Sigma\!\!=\!\!\{e_X\!\!\in\!\!\R^{6}\!:\!(I_3\!\otimes\!\eta_1^\top)e_X\!\!\leq\!\!{0}\}$, $e_{X}(t)\!\!\in\!\!\Omega$, $\forall t\!\!\in\!\!\R_+$, $\eta_1\!\!=\!\!(1,0)^\top$.


\begin{figure}[htbp]
    \centering
    \begin{subfigure}{.5\linewidth}
        \centering
        \includegraphics[width=\linewidth]{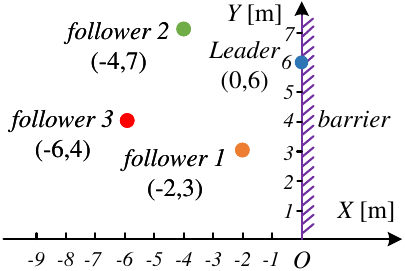}
        \caption{}
    \end{subfigure}
    \begin{subfigure}{.48\linewidth}
        \centering
        \includegraphics[width=.7\linewidth]{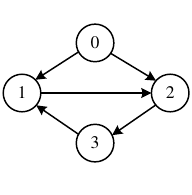}
        \caption{}
    \end{subfigure}
    \caption{The diagram of (a) the MRS with specified initial position $(X_{i,1}(0),Y_{i,1}(0))$, $i\!\!=\!\!\overline{0,3}$; and (b) communication topology.}
    \label{fig:scenario}
\end{figure}

Let $w_{ij}\!\!=\!\!1$ for $(i,j)\!\!\in\!\!\mathcal{E}$.
Let $v_i$ be defined in \eqref{eq:def_vi} with $x_i\!\!=\!\!X_i$, $i\!\!=\!\!\overline{0,3}$ and $x_i\!\!=\!\!Y_i$, $i\!\!=\!\!\overline{0,3}$, using Theorem \ref{thm:state_con}, one has the solution $v_X\!\!=\!\!e_X$, $v_X\!\!=\!\!(v_{X_1}^\top,v_{X_2}^\top,v_{X_3}^\top)^\top\!\!\!\in\!\!\R^{6}$ and $v_Y\!\!=\!\!e_Y$, $v_Y\!\!=\!\!(v_{Y_1}^\top,v_{Y_2}^\top,v_{Y_3}^\top)^\top\!\!\!\in\!\!\R^{6}$, respectively. Let $u_{X_i}$ take form of the finite-time non-overshooting consensus control \eqref{eq:homo_control_nonover}, then
\begin{equation}\label{eq:nonover_MRS}
    u_{X_i}\!\!=\!\!-\|v_{X_i}\|_{\dn}^{1+\mu}K_{lin}\dn(-\ln\|v_{X_i}\|_{\dn})v_{X_i},\;\;v_{X_i}\!\!=\!\!e_{X_i},\;\;i\!\!=\!\!\overline{1,3}.
\end{equation}
Let $u_{Y_i}$ take form of the finite-time consensus control \eqref{eq:homo_control}, then
\begin{equation}\label{eq:consensus_MRS}
    u_{Y_i}\!\!=\!\!-\|v_{Y_i}\|_{\dn}^{1+\mu}K\dn(-\ln\|v_{Y_i}\|_{\dn})v_{Y_i},\;\;v_{Y_i}\!\!=\!\!e_{Y_i},\;\;i\!\!=\!\!\overline{1,3}.
\end{equation}

For the controller \eqref{eq:nonover_MRS}, let $\lambda\!\!=\!\!1$, according to equation \eqref{eq:lin_control}, one has $K_{lin}\!\!=\!\!(\lambda^2,2\lambda)\!\!=\!\!(1,2)$.
Let $\mu\!\!=\!\!-0.2$, then
$G_{\dn}\!\!=\!\!\diag\{1.2,1\}$. YALMIP gives the solution of \eqref{eq:LMI_P} as follows,
\begin{equation*}
    P\!\!=\!\!\left(\begin{smallmatrix}
        0.0020 & 0.0005\\
        0.0005 & 0.0012\\
    \end{smallmatrix}\right).
\end{equation*}
Based on the above parameters, the real-time value of $\|e_{X_i}\|_\dn$ in \eqref{eq:nonover_MRS} is provided by a online solution of
$\|\dn(-\ln\|e_{X_i}\|_\dn)e_{X_i}\|_P\!\!=\!\!1$
using the ``HCS Toolbox for MATLAB'' available in \cite{polyakov_hcs_toolbox}.

For the controller \eqref{eq:consensus_MRS}, let $\mu\!\!=\!\!-0.2$, thus
$G_{\dn}\!\!=\!\!\diag\{1.2,1\}$. YALMIP gives the solution of \eqref{eq:LMI_P_U} as follows,
\begin{equation*}
    X\!\!=\!\!\left(\begin{smallmatrix}
        0.8281 & -0.3107\\
        -0.3107 & 0.9377
    \end{smallmatrix}\right),\quad Y\!\!=\!\!\left(\begin{smallmatrix}
        0.7502 & 0.5000\\
    \end{smallmatrix}\right),
\end{equation*}
immediately we have
\begin{equation*}
P\!\!=\!\!X^{-1}\!\!\!=\!\!\left(\begin{smallmatrix}
        1.3791 & 0.4569\\
        0.4569 & 1.2178
    \end{smallmatrix}\right),\quad
    K\!\!=\!\!YX^{-1}\!\!\!=\!\!\left(\begin{smallmatrix}
1.2630 & 0.9517
\end{smallmatrix}\right).
\end{equation*}
 Similarly, the real-time value of $\|e_{Y_i}\|_\dn$ in \eqref{eq:consensus_MRS} is provided by the online solution of
  $\|\dn(-\ln\|e_{Y_i}\|_\dn)e_{Y_i}\|_P\!\!=\!\!1$.

Let the initial position of the MRS be Fig. \ref{fig:scenario} gives, and the initial velocity $(X_{0,2}(0),Y_{0,2}(0))\!\!=\!\!(0[{\mathrm{m}}/{\mathrm{s}}],1[{\mathrm{m}}/{\mathrm{s}}])$, $(X_{i,2}(0),Y_{i,2}(0))\!\!=\!\!(1[{\mathrm{m}}/{\mathrm{s}}],1[{\mathrm{m}}/{\mathrm{s}}])$, $i\!\!=\!\!\overline{1,3}$. The implicit Euler method is employed to solve the closed-loop dynamics equation \eqref{eq:MRS}, \eqref{eq:nonover_MRS}, \eqref{eq:consensus_MRS} on MATLAB. The performance of non-overshooting finite-time consensus is shown in Fig. \ref{fig:homo_consensus}.
\begin{figure}[htbp]
    \centering
    \includegraphics[width=0.9\linewidth]{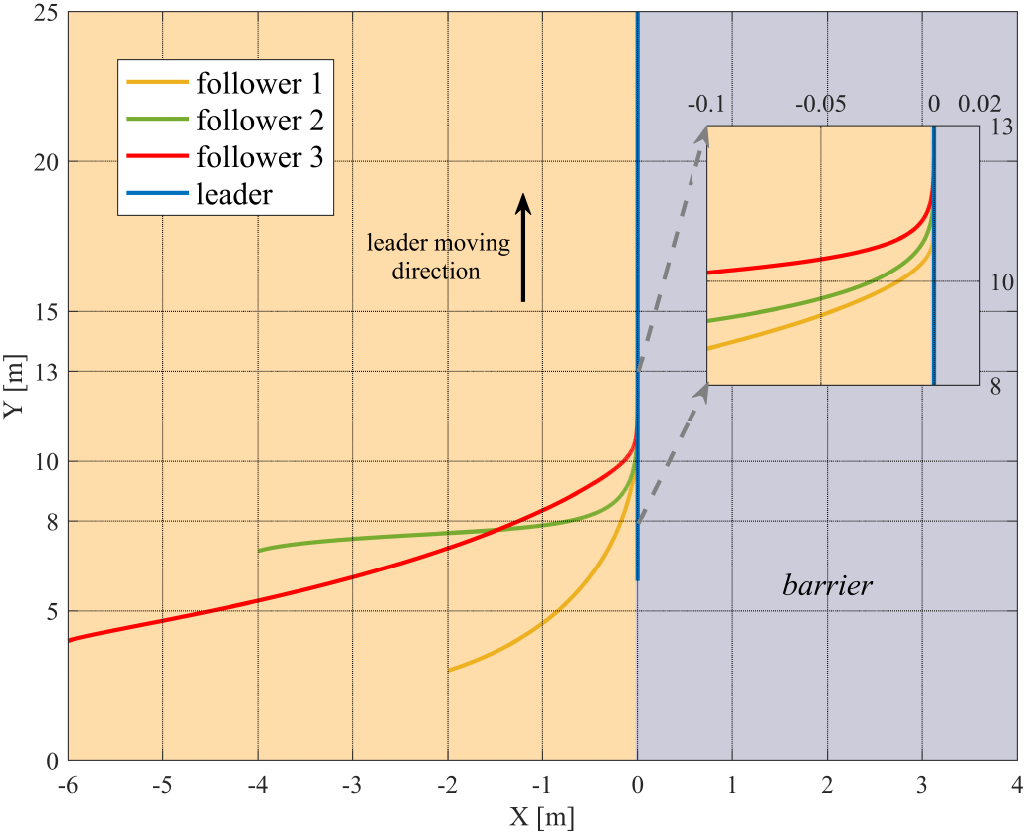}
    \caption{The finite-time non-overshooting leader-following consensus with the homogeneous controller \eqref{eq:nonover_MRS}, \eqref{eq:consensus_MRS}.}
    \label{fig:homo_consensus}
\end{figure}

\subsection{Robust Non-overshooting Finite-time Consensus}

Let the matched disturbances of MRS \eqref{eq:MRS} be defined as below:
$\hat q_{X_0}\!\!\!=\!\!0$, $\hat q_{X_i}\!\!\in\!\!\R$ is uniformly
distributed in the interval $(-0.540, 0.540)$, $(-0.444, 0.444)$ and $(-0.462, 0.462)$ for $i\!\!=\!\!1$, $i\!\!=\!\!2$ and $i\!\!=\!\!3$, respectively;
and $\hat q_{Y_i}\!\!\in\!\!\R$ is
distributed in the interval $(-0.030,0.030)$, $(-0.428, 0.428)$, $(-0.533, 0.533)$ and $(-0.441, 0.441)$ for $i\!\!=\!\!0$, $i\!\!=\!\!1$, $i\!\!=\!\!2$ and $i\!\!=\!\!3$, respectively.

Let $\mu\!\!=\!\!-1$. Let the initial conditions be the same with Section \ref{Sim:non-dis}. The homogeneous controller \eqref{eq:nonover_MRS} and \eqref{eq:consensus_MRS} regulate the followers to reach the leader in a finite time without overshoots. As a comparison,  the linear controller (by letting $\mu\!\!=\!\!0$ for \eqref{eq:nonover_MRS} and \eqref{eq:consensus_MRS}) cannot ensure a safe movement for the MAS. This comparative performance is shown in Fig. \ref{fig:ISS}.


\begin{figure}[htbp]
    \centering
    \begin{subfigure}{\linewidth}
        \centering
        \includegraphics[width=0.9\linewidth]{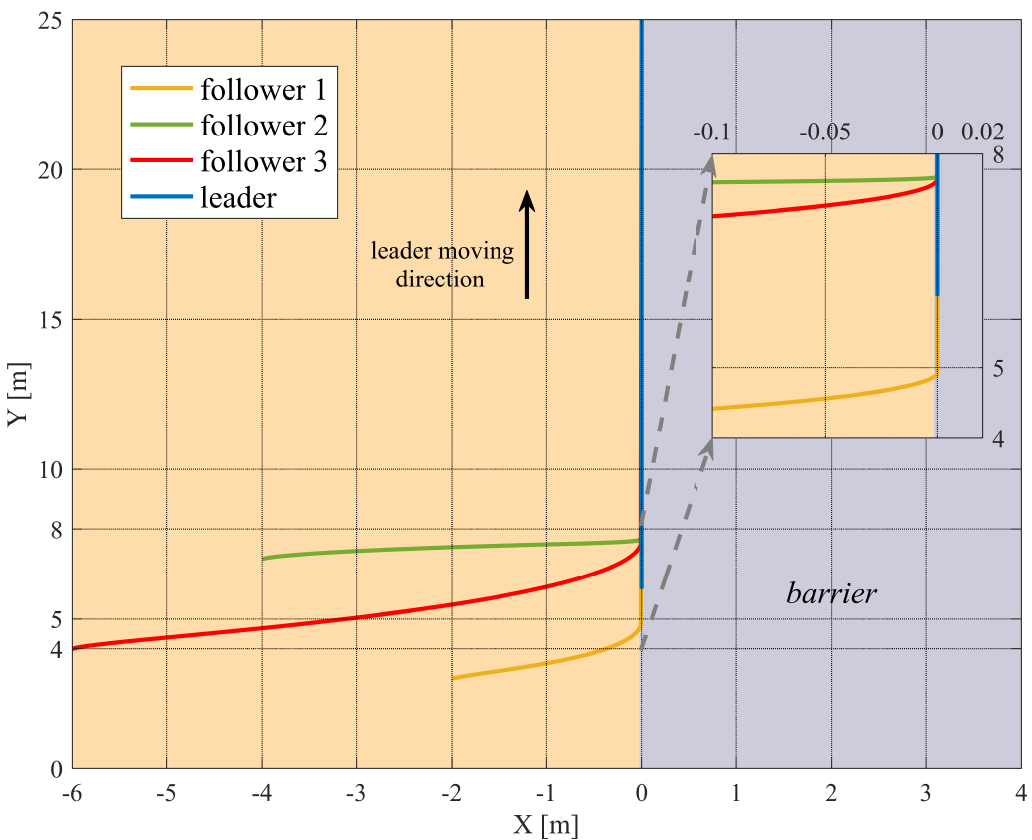}
        \caption{}
    \end{subfigure}
    \hfill
    \begin{subfigure}{\linewidth}
        \centering
        \includegraphics[width=0.9\linewidth]{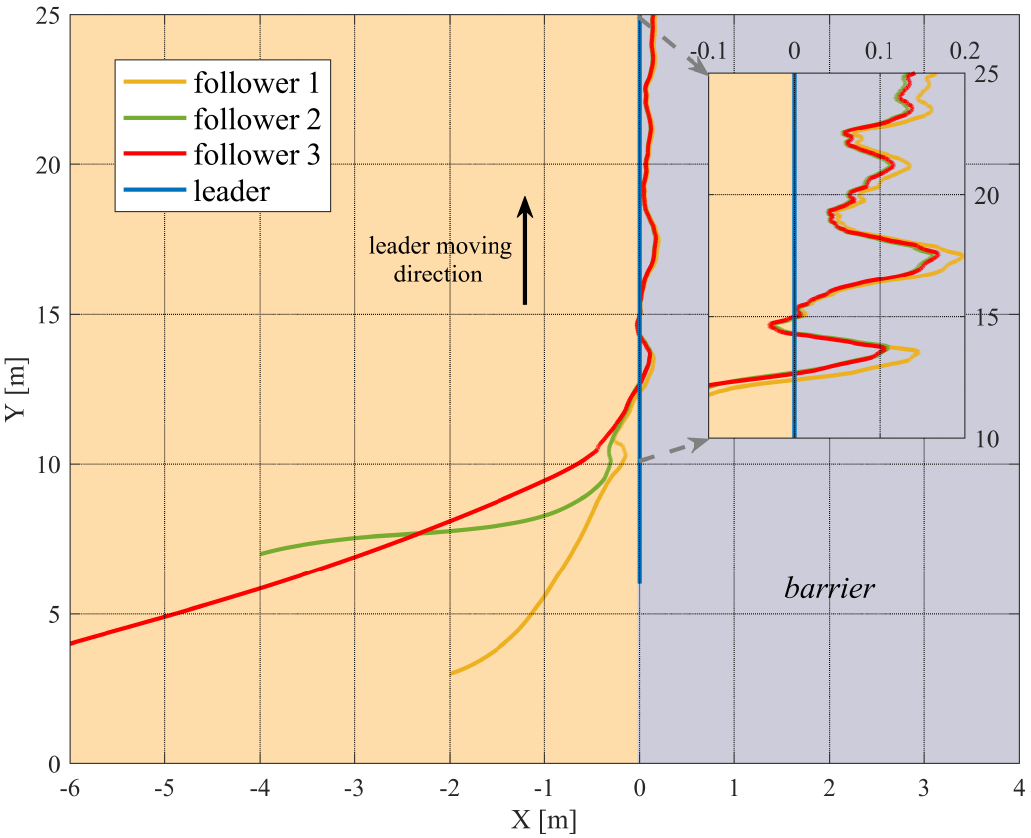}
        \caption{}
    \end{subfigure}
    \caption{The trajectory of the MRS considering disturbance by employing the (a) homogeneous control protocol with $\mu\!\!=\!\!-1$; and (b) linear control protocol.}
    \label{fig:ISS}
\end{figure}

\section{Conclusion}\label{Con}
This paper addresses the finite-time non-overshooting leader-following consensus for MAS. 
A control protocol ensuring an asymptotic non-overshooting consensus is designed. By upgrading the linear protocol into a homogeneous one, the finite-time non-overshooting consensus is achieved. It is further shown that the homogeneous control protocol preserves the non-overshooting performance of the MAS even in the presence of some class of disturbances. Simulation results validate the obtained controls.


\bibliographystyle{IEEEtran}
\bibliography{ref/ref}

\begin{thebibliography}{10}
\providecommand{\url}[1]{#1}
\csname url@samestyle\endcsname
\providecommand{\newblock}{\relax}
\providecommand{\bibinfo}[2]{#2}
\providecommand{\BIBentrySTDinterwordspacing}{\spaceskip=0pt\relax}
\providecommand{\BIBentryALTinterwordstretchfactor}{4}
\providecommand{\BIBentryALTinterwordspacing}{\spaceskip=\fontdimen2\font plus
\BIBentryALTinterwordstretchfactor\fontdimen3\font minus
  \fontdimen4\font\relax}
\providecommand{\BIBforeignlanguage}[2]{{%
\expandafter\ifx\csname l@#1\endcsname\relax
\typeout{** WARNING: IEEEtran.bst: No hyphenation pattern has been}%
\typeout{** loaded for the language `#1'. Using the pattern for}%
\typeout{** the default language instead.}%
\else
\language=\csname l@#1\endcsname
\fi
#2}}
\providecommand{\BIBdecl}{\relax}
\BIBdecl

\bibitem{ni2010leader}
W.~Ni and D.~Cheng, ``Leader-following consensus of multi-agent systems under
  fixed and switching topologies,'' \emph{Systems \& control letters}, vol.~59,
  no. 3-4, pp. 209--217, 2010.

\bibitem{guan2012finite}
Z.-H. Guan, F.-L. Sun, Y.-W. Wang, and T.~Li, ``Finite-time consensus for
  leader-following second-order multi-agent networks,'' \emph{IEEE Transactions
  on Circuits and Systems I: Regular Papers}, vol.~59, no.~11, pp. 2646--2654,
  2012.

\bibitem{li2023generalized}
M.~Li, A.~Polyakov, and G.~Zheng, ``On generalized homogeneous leader-following
  consensus control for multi-agent systems,'' \emph{IEEE Transactions on
  Control of Network Systems}, 2023.

\bibitem{fuller1960relay}
A.~T. Fuller, ``Relay control systems optimized for various performance
  criteria,'' \emph{IFAC Proceedings Volumes}, vol.~1, no.~1, pp. 520--529,
  1960.

\bibitem{korobov1979solution}
V.~I. Korobov, ``A solution of the problem of synthesis using a controllability
  function,'' in \emph{Doklady Akademii Nauk}, vol. 248, no.~5.\hskip 1em plus
  0.5em minus 0.4em\relax Russian Academy of Sciences, 1979, pp. 1051--1055.

\bibitem{haimo1986finite}
V.~T. Haimo, ``Finite time controllers,'' \emph{SIAM Journal on Control and
  Optimization}, vol.~24, no.~4, pp. 760--770, 1986.

\bibitem{bhat2000finite}
S.~P. Bhat and D.~S. Bernstein, ``Finite-time stability of continuous
  autonomous systems,'' \emph{SIAM Journal on Control and optimization},
  vol.~38, no.~3, pp. 751--766, 2000.

\bibitem{wang2008finite}
X.~Wang and Y.~Hong, ``Finite-time consensus for multi-agent networks with
  second-order agent dynamics,'' \emph{IFAC Proceedings volumes}, vol.~41,
  no.~2, pp. 15\,185--15\,190, 2008.

\bibitem{5404774}
A.~Nedic, A.~Ozdaglar, and P.~A. Parrilo, ``Constrained consensus and
  optimization in multi-agent networks,'' \emph{IEEE Transactions on Automatic
  Control}, vol.~55, no.~4, pp. 922--938, 2010.

\bibitem{lin2013constrained}
P.~Lin and W.~Ren, ``Constrained consensus in unbalanced networks with
  communication delays,'' \emph{IEEE Transactions on Automatic Control},
  vol.~59, no.~3, pp. 775--781, 2013.

\bibitem{liu2017constrained}
Q.~Liu, S.~Yang, and Y.~Hong, ``Constrained consensus algorithms with fixed
  step size for distributed convex optimization over multiagent networks,''
  \emph{IEEE Transactions on Automatic Control}, vol.~62, no.~8, pp.
  4259--4265, 2017.

\bibitem{qiu2016distributed}
Z.~Qiu, S.~Liu, and L.~Xie, ``Distributed constrained optimal consensus of
  multi-agent systems,'' \emph{Automatica}, vol.~68, pp. 209--215, 2016.

\bibitem{de2001stabilization}
P.~De~Leenheer and D.~Aeyels, ``Stabilization of positive linear systems,''
  \emph{Systems \& control letters}, vol.~44, no.~4, pp. 259--271, 2001.

\bibitem{rami2007controller}
M.~A. Rami and F.~Tadeo, ``Controller synthesis for positive linear systems
  with bounded controls,'' \emph{IEEE Transactions on Circuits and Systems II:
  Express Briefs}, vol.~54, no.~2, pp. 151--155, 2007.

\bibitem{farina2011positive}
L.~Farina and S.~Rinaldi, \emph{Positive linear systems: theory and
  applications}.\hskip 1em plus 0.5em minus 0.4em\relax John Wiley \& Sons,
  2011.

\bibitem{6681911}
M.~E. Valcher and P.~Misra, ``On the stabilizability and consensus of positive
  homogeneous multi-agent dynamical systems,'' \emph{IEEE Transactions on
  Automatic Control}, vol.~59, no.~7, pp. 1936--1941, 2014.

\bibitem{7892854}
M.~E. Valcher and I.~Zorzan, ``On the consensus of homogeneous multiagent
  systems with positivity constraints,'' \emph{IEEE Transactions on Automatic
  Control}, vol.~62, no.~10, pp. 5096--5110, 2017.

\bibitem{8862888}
J.~J.~R. Liu, J.~Lam, and Z.~Shu, ``Positivity-preserving consensus of
  homogeneous multiagent systems,'' \emph{IEEE Transactions on Automatic
  Control}, vol.~65, no.~6, pp. 2724--2729, 2020.

\bibitem{su2017positive}
H.~Su, H.~Wu, X.~Chen, and M.~Z.~Q. Chen, ``Positive edge consensus of complex
  networks,'' \emph{IEEE Transactions on Systems, Man, and Cybernetics:
  Systems}, vol.~48, no.~12, pp. 2242--2250, 2017.

\bibitem{su2018positive}
H.~Su, H.~Wu, and J.~Lam, ``Positive edge-consensus for nodal networks via
  output feedback,'' \emph{IEEE Transactions on Automatic Control}, vol.~64,
  no.~3, pp. 1244--1249, 2018.

\bibitem{phillips1988conditions}
S.~F. Phillips and D.~E. Seborg, ``Conditions that guarantee no overshoot for
  linear systems,'' \emph{International Journal of Control}, vol.~47, no.~4,
  pp. 1043--1059, 1988.

\bibitem{krstic2006nonovershooting}
M.~Krstic and M.~Bement, ``Nonovershooting control of strict-feedback nonlinear
  systems,'' \emph{IEEE Transactions on Automatic Control}, vol.~51, no.~12,
  pp. 1938--1943, 2006.

\bibitem{schmid2010unified}
R.~Schmid and L.~Ntogramatzidis, ``A unified method for the design of
  nonovershooting linear multivariable state-feedback tracking controllers,''
  \emph{Automatica}, vol.~46, no.~2, pp. 312--321, 2010.

\bibitem{8378231}
R.~Schmid and H.~D. Aghbolagh, ``Nonovershooting cooperative output regulation
  of linear multiagent systems by dynamic output feedback,'' \emph{IEEE
  Transactions on Control of Network Systems}, vol.~6, no.~2, pp. 526--536,
  2019.

\bibitem{ning2020bipartite}
B.~Ning, Q.-L. Han, and Z.~Zuo, ``Bipartite consensus tracking for second-order
  multiagent systems: A time-varying function-based preset-time approach,''
  \emph{IEEE Transactions on Automatic Control}, vol.~66, no.~6, pp.
  2739--2745, 2020.

\bibitem{hong2006tracking}
Y.~Hong, J.~Hu, and L.~Gao, ``Tracking control for multi-agent consensus with
  an active leader and variable topology,'' \emph{Automatica}, vol.~42, no.~7,
  pp. 1177--1182, 2006.

\bibitem{ren2007multi}
W.~Ren, ``Multi-vehicle consensus with a time-varying reference state,''
  \emph{Systems \& Control Letters}, vol.~56, no. 7-8, pp. 474--483, 2007.

\bibitem{liu2011synchronization}
S.~Liu, L.~Xie, and F.~L. Lewis, ``Synchronization of multi-agent systems with
  delayed control input information from neighbors,'' \emph{Automatica},
  vol.~47, no.~10, pp. 2152--2164, 2011.

\bibitem{wang2021generalized}
S.~Wang, A.~Polyakov, and G.~Zheng, ``Generalized homogenization of linear
  controllers: Theory and experiment,'' \emph{International Journal of Robust
  and Nonlinear Control}, vol.~31, no.~9, pp. 3455--3479, 2021.

\bibitem{polyakov2020generalized}
A.~Polyakov, \emph{Generalized homogeneity in systems and control}.\hskip 1em
  plus 0.5em minus 0.4em\relax Springer, 2020.

\bibitem{olfati2004consensus}
R.~Olfati-Saber and R.~M. Murray, ``Consensus problems in networks of agents
  with switching topology and time-delays,'' \emph{IEEE Transactions on
  automatic control}, vol.~49, no.~9, pp. 1520--1533, 2004.

\bibitem{zhu2023robust}
Z.~H. Zhu, H.~Wu, Z.~H. Guan, Z.~W. Liu, Y.~Chen, and X.~Zheng, ``Robust
  prescribed-time coordination control of signed networks with arbitrary
  topologies,'' \emph{IEEE Transactions on Network Science and Engineering},
  2023.

\bibitem{9285180}
Y.~Zhai, Z.-W. Liu, Z.-H. Guan, and Z.~Gao, ``Resilient delayed impulsive
  control for consensus of multiagent networks subject to malicious agents,''
  \emph{IEEE Transactions on Cybernetics}, vol.~52, no.~7, pp. 7196--7205,
  2022.

\bibitem{wang2010finite}
L.~Wang and F.~Xiao, ``Finite-time consensus problems for networks of dynamic
  agents,'' \emph{IEEE Transactions on Automatic Control}, vol.~55, no.~4, pp.
  950--955, 2010.

\bibitem{ames2016control}
A.~D. Ames, X.~Xu, J.~W. Grizzle, and P.~Tabuada, ``Control barrier function
  based quadratic programs for safety critical systems,'' \emph{IEEE
  Transactions on Automatic Control}, vol.~62, no.~8, pp. 3861--3876, 2016.

\bibitem{polyakov2023finite}
A.~Polyakov and M.~Krstic, ``Finite-and fixed-time nonovershooting stabilizers
  and safety filters by homogeneous feedback,'' \emph{IEEE Transactions on
  Automatic Control}, vol.~68, no.~11, pp. 6434--6449, 2023.

\bibitem{orlov2004finite}
Y.~Orlov, ``Finite time stability and robust control synthesis of uncertain
  switched systems,'' \emph{SIAM Journal on Control and Optimization}, vol.~43,
  no.~4, pp. 1253--1271, 2004.

\bibitem{khalil2009lyapunov}
H.~K. Khalil, ``Lyapunov stability,'' \emph{Control systems, robotics and
  automation}, vol.~12, p. 115, 2009.

\bibitem{isidori1985nonlinear}
A.~Isidori, \emph{Nonlinear control systems: an introduction}.\hskip 1em plus
  0.5em minus 0.4em\relax Springer, 1985.

\bibitem{ren2005consensus}
W.~Ren and R.~W. Beard, ``Consensus seeking in multiagent systems under
  dynamically changing interaction topologies,'' \emph{IEEE Transactions on
  automatic control}, vol.~50, no.~5, pp. 655--661, 2005.

\bibitem{guan2012impulsive}
Z.-H. Guan, Z.-W. Liu, G.~Feng, and M.~Jian, ``Impulsive consensus algorithms
  for second-order multi-agent networks with sampled information,''
  \emph{Automatica}, vol.~48, no.~7, pp. 1397--1404, 2012.

\bibitem{Filippov1988:Book}
A.~F. Filippov, \emph{Differential equations with discontinuous righthand
  sides: control systems}.\hskip 1em plus 0.5em minus 0.4em\relax Springer
  Science \& Business Media, 2013, vol.~18.

\bibitem{boyd1994linear}
S.~Boyd, L.~El~Ghaoui, E.~Feron, and V.~Balakrishnan, \emph{Linear matrix
  inequalities in system and control theory}.\hskip 1em plus 0.5em minus
  0.4em\relax SIAM, 1994.

\bibitem{polyakov_hcs_toolbox}
A.~Polyakov, ``{HCS Toolbox for MATLAB},''
  \url{https://gitlab.inria.fr/polyakov/hcs-toolbox-for-matlab}, 2024,
  accessed: 2024-10-10.

\end{thebibliography}

\end{document}